\definecolor{myurlcolor}{rgb}{0,0,0.7}
\newtheorem{rmq}{Remark}[section]
\newtheorem{dfn}{Definition}[section]
\newtheorem{lem}{Lemma}[section]
\newtheorem{thm}{Theorem}[section]
\newcommand{\bprof}{\begin{prof}}
\newcommand{\eprof}{\end{prof}}
\newenvironment{prof}[1][Proof]{\textbf{#1.} }{\ \rule{0.5em}{0.5em}}
\newcommand{\bea}{\begin{eqnarray}}
\newcommand{\eea}{\end{eqnarray}}
\newcommand{\beq}{\begin{equation}}
\newcommand{\eeq}{\end{equation}}
\newcommand{\enn}{\nonumber \end{equation}}
\newcommand{\beqs}{\begin{eqnarray*}}
\newcommand{\eeqs}{\end{eqnarray*}}
 \newcommand{\cE}{\mathcal{E}}
\newcommand{\cT}{\mathcal{T}}
\def\cN{{\mathcal N}}
\newcommand{\curl}{\mathop{\rm curl}\nolimits}
\newcommand{\dive}{\mathop{\rm div}\nolimits}
\def\bw{\textbf{w}}
\def\bu{\textbf{u}}
\def\bn{\textbf{n}}
\def\Om{\Omega}
\def\bff{\textbf{f}}
\title[A posteriori error estimation for the Navier-Stokes/Darcy coupled problem]
{Residual-based a posteriori error estimates for a conforming finite element discretization of the Navier-Stokes/Darcy 
coupled problem}
\author{Koffi Wilfrid Houedanou$^{(a,b)}$}
\email{a) khouedanou@yahoo.fr}
\address{Universit\'e d'Abomey-Calavi (UAC), Rep. du B\'enin.}
\email{b) houedanou@aims.ac.za}
\address{African Institute for Mathematical Sciences (AIMS) South Africa.}
\author{Jamal Adetola $^{(c)}$}
\email{c)adetolajamal58@yahoo.com }
\address{
Institut de Math\'ematiques et de Sciences Physiques (IMSP), Rep. du B\'enin.
}
\author{Bernardin Ahounou $^{(d)}$}
\email{d) bahounou@yahoo.fr}
\address{
Universit\'e d'Abomey-Calavi (UAC), Rep. du B\'enin.
}
\begin{document}

\maketitle
\begin{Large}
\begin{abstract}\normalsize
We consider in this paper, a new a posteriori residual type error estimator of a conforming mixed
finite element method for the coupling of fluid flow with porous media flow on isotropic meshes. 
Flows are governed by the Navier-Stokes and Darcy equations, respectively, and the corresponding transmission conditions are 
given by mass conservation, balance of normal forces, and the Beavers-Joseph-Saffman law. 
The finite element subspaces consider Bernardi-Raugel and Raviart-Thomas elements for the velocities, piecewise constants
for the pressures, and continuous piecewise linear elements for a Lagrange multiplier defined
on the interface. The posteriori error estimate is based on a suitable evaluation on the residual of the finite element 
solution. It is proven that the a posteriori error estimate provided  in this paper is both reliable and efficient.
In addition, our analysis can be extended to other finite element subspaces yielding a stable Galerkin scheme.\\
{\bf Mathematics Subject Classification [MSC]:} 74S05, 74S10, 74S15,
74S20, 74S25, 74S30.\\
\textbf{Keywords : } Error estimator; Finite element method; Navier-Stokes equations, Darcy equations.
\end{abstract}

\tableofcontents
\section{Introduction}
There are many serious problems currently facing the world in which the coupling between groundwater and surface water is 
important. These include questions such as predicting how pollution discharges into streams, lakes, and rivers making its way into 
the water supply. This coupling is also important in technological applications involving filtration.
In particular, for specific applications we refer to flow in vuggy porous media appearing in petroleum extraction 
\cite{AB:2007,AL:2006}, 
groundwater system in karst aquifers \cite{FHKH:2009,M:2012}, 
reservoir wellbore\cite{ASKS:2016}, industrial filtrations \cite{HWNW:2006,N:1998}, 
topology optimization \cite{GP:2006}, and blood 
motion in tumors and microvessels \cite{PF:2003, SF:2005}. 
We  refer to the nice overview \cite{27} and the references therein for its physical background, modeling, and standard numerical 
methods. 
One of the most popular models utilized to describe the aforementioned interaction is the Navier-Stokes/Darcy (or 
Stokes-Darcy) model, which consists in a set of differential equations where the Navier-Stokes (or Stokes) problem is 
coupled with the Darcy model through a set of coupling equations acting on a common interface given by masse conservation, 
balance of normal forces, and the so called Beavers-Joseph-Saffman condition. The
Beavers-Joseph-Saffman condition was experimentally derived by Beavers and Joseph in \cite{23}, modified by Saffman in \cite{44}, 
and later mathematically justified in \cite{32,17,48,37}.

A posteriori error estimators are computable quantities, expressed in terms of the discrete solution  and of the data that measure the actual discrete
errors without the knowledge of the exact solution. They are essential to design adaptive mesh 
refinement  algorithms  which equi-distribute the computational effort and optimize the approximation efficiency. 
Since the pioneering work of Babu\v{s}ka and Rheinboldt \cite{babuska:78a},   adaptive finite element methods based on 
a posteriori error estimates have been extensively investigated. 

A posteriori error estimations have been well-established for the  coupled Stokes-Darcy problem on isotropic meshes, mainly for 
 $2D$ domains \cite{43,49,46,47,AHN:15} and 
 recently on anisotropic meshes \cite{HA:2016,H:2015}. However, only few works exist 
 for the coupled Navier-Stokes/Darcy  problem, see for instance \cite{SGR:2016, HAN:2014}. Up to the author's knowledge, 
 the first work dealing with adaptive algorithms for the Navier-Stokes/Darcy coupling is \cite{HAN:2014}, where 
 an a posteriori error estimator for a discontinuous Galerkin approximation of this coupled problem with 
 constant parameters is proposed. In \cite{SGR:2016}, the authors have derived a reliable and efficient residual-based a 
 posteriori error estimator for the three dimensional version of 
 the augmented-mixed method introduced in \cite{CGOS:2015}.
 The finite element subspaces that they have employed are piecewise
constants, Raviart-Thomas elements of lowest order, continuous piecewise linear elements,
and piecewise constants for the strain,
Cauchy stress, velocity, and vorticity in the fluid, respectively, whereas Raviart-Thomas 
elements of lowest order for the velocity,
piecewise constants for the pressure, and continuous piecewise linear elements for the traces,
are considered in the porous medium.
The authors in \cite{MR:2016} consider the standard mixed formulation in the Navier-Stokes domain and the dual-mixed 
 one in the Darcy region, which yields the introduction of the trace of the porous medium pressure as a suitable Lagrange 
 multiplier. The finite  element subspaces defining the discrete formulation employ Bernardi-Raugel and Raviart-Thomas 
 elements for the velocities, piecewise constants for the pressures, and continuous piecewise linear elements for the 
 Lagrange multiplier. An a priori error analysis is performed with some numerical tests confirming the convergence rates.
 
In this work, we develop an a posteriori error analysis for the finite element method studied in \cite{MR:2016}. 
 The a posteriori error estimate is based on a suitable evaluation on the residual of the finite element
 solution. We further prove that our a posteriori error estimator is both reliable and efficient. These main 
 results are summarized in Theorems \ref{uperbound} and \ref{Lowerbound}.
 The difference between our paper and the reference \cite{SGR:2016} is that
 our analysis uses the standard mixed formulation in the Navier-Stokes domain and 
 the dual-mixed one in the Darcy region, and another family of finite elements to approach the solution. 
 In addition, it's independent
 of the finite elements employed to stabilize the scheme in \cite{MR:2016}. 
 Indeed, no interpolation operator for example linked to the finite elements used in this work 
 is exploited in our a posteriori error analysis.
Consequently,  it can be extended to other finite element subspaces yielding a stable Galerkin scheme. 

The  rest of this work is organized  as follows.
Some  preliminaries and  notation are given in  Section  \ref{sec:r1}. In Section \ref{sec:r2}, the a posteriori 
error estimates are derived. The reliability analysis is carred out in Section \ref{sec:up}, whereas in 
Section \ref{sec:ul} we provide the efficiency analysis. Finally we offer our 
conclusion and the further works in Section \ref{sec:r3}.

\section{Preliminaries and notation}\label{sec:r1}
\subsection{Model problem}
For simplicity of exposition we set the problem in $\mathbb{R}^2$. However, our study can be extended to the 
$3D$ case with few modifications \cite{MR:2016,AHN:15}.
We consider the model of a flow in a bounded domain $\Omega\subset \mathbb{R}^2$, consisting of a 
porous medium domain $\Omega_D$, where the flow is a Darcy flow, and an  open region $\Omega_S=\Omega\smallsetminus 
\overline{\Omega}_D,$ where the flow is governed by the Navier-Stokes equations. The two regions are separated by an interface 
$\Sigma=\partial \Omega_D\cap \partial \Omega_S.$ Let $\Gamma_*=\partial \Omega_*\smallsetminus \Sigma$, $*\in\{S,D\}$.
Each interface and boundary is assumed to be polygonal. We denote by $\textbf{n}_S$ (resp. $\textbf{n}_D$) the 
unit outward normal vector along $\partial \Omega_S$  (resp. $\partial \Omega_D$). Note that on the interface 
$\Sigma$, we have $\textbf{n}_S=-\textbf{n}_D$. 
The FIGURE \ref{F1}  gives a schematic representation of the geometry.
\begin{figure}[htpb]
\begin{center}
\tikzstyle{grisEncadre}=[thick, dashed, fill=gray!20]
\begin{tikzpicture}[scale=0.75]
color=gray!100;
 \draw (1,1)--(7,1);
 \draw (1,1)--(1,5.5);
 \draw (1,5.5)--(7,5.5);
 \draw (7,1)--(7,5.5);
 \draw (1,3)--(7,3);
 \draw [grisEncadre](1,1) rectangle (7,3);
 \draw (4,1.8) node [above]{$\mbox{  $\Omega_D$ }$};
 \draw (4,4) node [above]{$\mbox{  $\Omega_S$ }$};
 \draw [>=stealth,->] [line width=1pt](2,3)--(2,3.5) node [right]{$\textbf{n}_D$};
 \draw [>=stealth,->] [line width=1pt](6,3)--(6,2.5) node [right]{$\textbf{n}_S$};
 \draw [>=stealth,->] [line width=1pt](4,2.8)--(4.7,2.8) node [below]{$\tau$};
 \draw (4.5,3.7) node [below]{$\Sigma$};
 \draw[line width=1pt](1,1)--(1,3) node[midway,above,sloped]{$\Gamma_D$};
 \draw[line width=1pt](1,1)--(7,1) node[midway,below,sloped]{$\Gamma_D$};
 \draw[line width=1pt](7,1)--(7,3) node[midway,below,sloped]{$\Gamma_D$};
 
 \draw[line width=1.pt](1,3)--(1,5.5) node[midway,above,sloped]{$\Gamma_S$};
 \draw[line width=1.pt](1,5.5)--(7,5.5) node[midway,above,sloped]{$\Gamma_S$};
 \draw[line width=1.pt](7,3)--(7,5.5) node[midway,below,sloped]{$\Gamma_S$};
 \end{tikzpicture}
\end{center}
\caption{\footnotesize{\textbf{Domains for the $2\mbox{D}$ Navier-Stokes/Darcy model.}}}
\label{F1}
\end{figure}
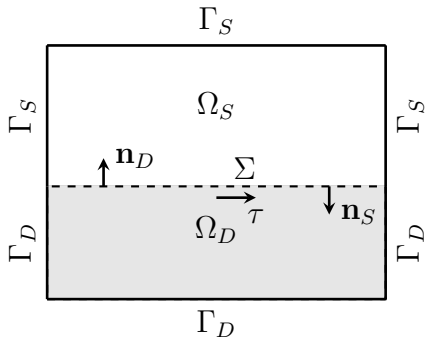

 For any function $v$ defined in 
$\Omega$, since its restriction to  $\Omega_S$ or to $\Omega_D$ could play a different mathematical 
roles (for instance their traces on $\Sigma$), we will set
$v_S=v_{|\Omega_S}$ and $v_D=v_{|\Omega_D}$.

In  $\Omega_*$, $*\in\{S,D\}$ we denote   by $\textbf{u}_*$ the fluid velocity and  by $p_*$ the pressure.
The motion of the  fluid  in $\Omega_S$ is described by the Navier-Stokes equations
\begin{eqnarray}\label{I1}
 \left\{
\begin{array}{ccccccccc}\label{r}
 -2\mu\dive \textbf{e}(\textbf{u}_S)+ \nabla p_S+\rho\left(\textbf{u}_S\cdot \nabla\right)\textbf{u}_S &=
 &\textbf{f}_S &\mbox{ in }&  &\Omega_S,&\\
\dive \textbf{u}_S&=&0  &\mbox{  in   }&  &\Omega_S,&\\
\textbf{u}_S&=&\textbf{0}  &\mbox{  on  }&  &\Gamma_S,&
\end{array}
\right.
\end{eqnarray}
while in the porous medium $\Omega_D$, by Darcy's law
\begin{eqnarray}\label{I2}
 \left\{
\begin{array}{cccccccccccc}\label{rDarcy}
 \textbf{K}^{-1}\mathbf{u}_D+\nabla p_D
 &=&\textbf{f}_D &\mbox{ in }&   &\Omega_D,&\\
\dive \textbf{u}_D&=& 0  &\mbox{ in  }&  &\Omega_D& \\
\textbf{u}_D\cdot\textbf{n} _D& =& 0  &\mbox{  on }&   &\Gamma_D.&
\end{array}
\right.
\end{eqnarray}
Here, $\mu> 0$ is the dynamic viscosity of the  fluid, $\rho$ is its density, $\textbf{f}_S$ is a given external force, 
$\textbf{f}_D$ is a given external force that accounts for gravity, i.e. $\textbf{f}_D=\rho \textbf{g}$ where 
$\textbf{g}$ is the gravity acceleration. $\dive$ is the usual divergence operator and $\textbf{e}$ is the strain rate 
tensor  defined by:
\begin{eqnarray*}
 \textbf{e}(\psi)_{ij}:=\frac{1}{2} \left(\frac{\partial \psi_i}{\partial x_j}+
 \frac{\partial \psi_j}{\partial x_i}\right), \mbox{   } 1\leqslant i,j\leqslant 2,
\end{eqnarray*} 
 and $\textbf{K}\in [L^{\infty}(\Omega_D)]^{2\times 2}$  a symmetric and uniformly positive definite tensor in $\Omega_D$ 
 representing the rock permeability $\kappa$ of the porous medium divided by the dynamic viscosity $\mu$ of the 
 fluid. Throughout the paper we assume that there exits $C> 0$ such that 
 $$ \xi\cdot \textbf{K}\cdot \xi\geq C \|\xi\|_{\mathbb{R}^2}^2,$$ for almost all $x\in \Omega_D$, and for all 
 $\xi\in\mathbb{R}^2$.\\
Finally we consider the following  interface conditions on $\Sigma :$
\begin{eqnarray} \label{cd1}
\textbf{u}_S\cdot \textbf{n}_S+\mathbf{u}_D\cdot \textbf{n}_D&=&0, \\
\label{cd2}
p_S-2\mu \textbf{n}_S\cdot\textbf{e}(\textbf{u}_S)\cdot \textbf{n}_S&=&p_D,\\
\label{cd3}
\frac{\sqrt{\tau\cdot\kappa\cdot\tau}}{\alpha_d\mu} \textbf{n}_S\cdot\textbf{e}(\textbf{u}_S)
\cdot\tau&=&-\textbf{u}_S\cdot\tau,
\end{eqnarray}
where $\alpha_d$  is a 
dimensionless constant which depends only on the geometrical characteristics of 
the porous medium.
Here,  Eq. (\ref{cd1}) represents mass conservation, Eq. (\ref{cd2}) the balance of normal forces, and Eq. (\ref{cd3}) the
Beavers-Joseph-Saffman conditions.  

Eqs. (\ref{I1}) to (\ref{cd3}) consist of the model of the coupled Navier-Stokes and 
Darcy flows problem that we will study below.
\subsection{The variational formulation}
In this section we introduce the weak formulation derived in \cite[Section 2.2]{MR:2016} for the coupled 
problem given by (\ref{I1}) to (\ref{cd3}). To this end, let 
 us first introduce further notations and definitions. In what follows, given $*\in \{S,D\}$, 
 $u,v\in L^2(\Omega_*)$, $\textbf{u}, \textbf{v}\in [L^2(\Omega_*)]^2$, and 
 $\textbf{M}, \textbf{N}\in [L^2(\Omega_*)]^{2\times 2}$, we set 
$$(u,v)_{*}:=\int_{\Omega_*}uv, \mbox{     }\hspace*{1cm} (\textbf{u},\textbf{v})_{*}:=
\int_{\Omega_*}\textbf{u}\cdot\textbf{v}, \mbox{  and  }\hspace*{1cm} (\textbf{M},\textbf{N})_{*}:=\int_{\Omega_*}\textbf{M}:\textbf{N},$$
where, given two arbitrary tensors $\textbf{M}$ and $\textbf{N}$, 
$$\textbf{M}:\textbf{N}:=\mbox{tr}(\textbf{M}^t\textbf{N})=\displaystyle\sum_{i,j=1}^2M_{ij}N_{ij},$$
where the superscript $^t$ denotes transposition.

We use the standard terminology for Lebesgue and Sobolev spaces. In addition, if $\mathcal{O}$ is a domain, 
given and $r\in\mathbb{R}$ and $p\in [1,\infty[,$ we define $\textbf{H}^r(\mathcal{O}):=[H^r(\mathcal{O})]^2$ and 
$\textbf{L}^p(\mathcal{O}):=[L^p(\mathcal{O})]^2$. For $r=0$ we write $\textbf{L}^2(\mathcal{O})$ and 
$L^2(\Gamma)$ instead of $\textbf{H}^0(\mathcal{O})$ and $H^0(\Gamma)$, respectively, where $\Gamma$ is a closed 
Lipschitz curve. The corresponding norms are denoted by $\|\cdot\|_{r,\mathcal{O}}$ (for $H^r(\mathcal{O})$ and 
$\textbf{H}^r(\mathcal{O})$), $\|\cdot\|_{r,\Gamma}$ (for $H^r(\Gamma)$) and $\|\cdot\|_{L^p(\mathcal{O})}$ (if 
$p\neq 2$). Also, the Hilbert space 
$$\textbf{H}(\dive;\mathcal{O}):=\left\{\textbf{w}\in \textbf{L}^2(\mathcal{O}): \hspace*{0.5cm}
\dive \textbf{w}\in L^2(\mathcal{O})\right\},$$
with norm $\|\cdot\|_{\dive,\mathcal{O}}$, is standard in  the realm of mixed problems (see, e.g. \cite{BF:91}).

On the other hand, the symbol for the $L^2(\Gamma)$ inner product
$$\langle \xi,\lambda\rangle_{\Gamma}:=\int_{\Gamma}\xi\lambda, \forall \xi,\lambda\in L^2(\Gamma),$$
will also be employed for their respective extension as the duality product $H^{-1/2}(\Gamma)\times H^{1/2}(\Gamma)$.
In addition, given two Hilbert spaces $H_1$ and $H_2$, the product space $H_1\times H_2$ will be endowed with 
the norm 
$\|\cdot\|_{H^1\times H^1}=\|\cdot\|_{H_1}+\|\cdot\|_{H_2}$.  Hereafter, given a non-negative integer $k$ and 
a subset $S$ of $\mathbb{R}^2$, $\mathbb{P}^l(S)$ stands for the space of polynomials defined on $S$ of degree 
$\leq l$. Finally, we employed $\textbf{0}$ as a generic null vector. 

The unknowns in the variational formulation of the Navier-Stokes/Darcy coupled problem and the corresponding spaces will be:
$\textbf{u}_S\in \textbf{H}_{\Gamma_S}^1(\Omega_S)$, $p_S\in L^2(\Omega_S)$, 
$\textbf{u}_D\in \textbf{H}_{\Gamma_D}(\dive;\Omega_D)$, $p_D\in L^2(\Omega_D)$, where
\begin{eqnarray*}
 \textbf{H}_{\Gamma_S}^1(\Omega_S)&:=&\left\{\textbf{v}\in \textbf{H}^1(\Omega_S): 
 \hspace*{0.5cm} \textbf{v}=\textbf{0} \mbox{  on  } \Gamma_S\right\},\\
 \textbf{H}_{\Gamma_D}(\dive;\Omega_D)&:=&\left\{\textbf{v}\in\textbf{H}(\dive;\Omega_D): 
 \hspace*{0.5cm} \textbf{v}\cdot\textbf{n}_D=0\mbox{  on  } \Gamma_D\right\}.
 \end{eqnarray*}
In addition, analogously to  \cite{29} we need to define a further unknown on the coupling boundary:
$$\lambda:=p_D\in H^{1/2}(\Sigma).$$
Note that, in principle, the space for $p_D$ does not allow enough regularity for the trace $\lambda$ to exist. 
However, the solution of Darcy equations has the pressure in $H^1(\Omega_D)$.

Next, for the derivation of the weak formulation of (\ref{I1})-(\ref{cd3})  we define the space 
$$L_0^2(\Omega):=\left\{ q\in L^2(\Omega):   \hspace*{0.2cm}\int_{\Omega} q=0\right\},$$
 and we group the unkowns and spaces as follows:\begin{eqnarray*}
 \textbf{u}:=(\textbf{u}_S,\textbf{u}_D)\in \textbf{H}:=
 \textbf{H}_{\Gamma_S}^1(\Omega_S)\times \textbf{H}_{\Gamma_D}(\dive;\Omega_D);
 (p,\lambda)\in\textbf{Q}:=L_0^2(\Omega)\times H^{1/2}(\Sigma),
 \end{eqnarray*}
 where $p:=p_{S}\chi_{\Omega_S}+p_D\chi_{\Omega_D}$, with $\chi_{\Omega_*}$ being the characteristic function for
 $*\in\{S,D\}$.
 
 The weak formulation of the coupled problem (\ref{I1})-(\ref{cd3}) can be stated as follows \cite{MR:2016}:
 Find $(\textbf{u},\psi)=((\textbf{u}_S,\textbf{u}_D),(p,\lambda))\in\textbf{H}\times \textbf{Q}$, such that 
 \begin{eqnarray}\label{FV}
 \left\{
\begin{array}{ccccccccc}\label{r}
  \textbf{a}(\textbf{u}_S;\textbf{u},\textbf{v})+\textbf{b}(\textbf{v},(p,\lambda))&=&\textbf{F}(\textbf{v}), 
  \forall \textbf{v}:=(\textbf{v}_S,\textbf{v}_D)\in\textbf{H},\\\label{A2}
  \textbf{b}(\textbf{u},(q,\xi))&=&0 \forall (q,\xi)\in \textbf{Q},
 \end{array}
\right.
\end{eqnarray}
where $\textbf{a}:\textbf{H}_{\Gamma_S}^1(\Omega_S)\times (\textbf{H}\times \textbf{H})\longrightarrow \mathbb{R}$ and 
$\textbf{b}:\textbf{H}\times \textbf{Q}\longrightarrow\mathbb{R}$ are the forms defined by 
\begin{eqnarray*}
 \textbf{a}(\textbf{w}_S;\textbf{u},\textbf{v})&:=&A_S(\textbf{u}_S,\textbf{v}_S)+O_S(\textbf{w}_S;\textbf{u}_S,\textbf{v}_S)+
 A_D(\textbf{u}_D,\textbf{v}_D),\\
 \textbf{b}(\textbf{v},(q,\xi))&:=&-(q,\dive \textbf{v}_S)_S-(q,\dive \textbf{v}_D)_D+
 \langle \textbf{v}_S\cdot\textbf{n}_S+\textbf{v}_D\cdot\textbf{n}_D,\xi\rangle_{\Sigma},
\end{eqnarray*}
with 
\begin{eqnarray*}
 A_S(\textbf{u}_S,\textbf{v}_S)&:=&2\mu \left(\textbf{e}(\textbf{u}_S),\textbf{e}(\textbf{v}_S)\right)_S+
 \left\langle\frac{\alpha_d\mu}{\sqrt{\tau\cdot\kappa\cdot\tau}}\textbf{u}_S\cdot\tau,\textbf{v}_S\cdot\tau
 \right\rangle_{\Sigma},\\
 O_S(\textbf{w}_S;\textbf{u}_S,\textbf{v}_S)&:=&\rho\left((\textbf{w}_S\cdot\nabla)\textbf{u}_S,\textbf{v}_S\right)_{S},\\
 A_D(\textbf{u}_D,\textbf{v}_D)&:=&(\textbf{K}^{-1}\textbf{u}_D,\textbf{v}_D)_D,
\end{eqnarray*}
and $\textbf{F}(\textbf{v})$ is the linear functional $\textbf{F}: \textbf{H}\longrightarrow \mathbb{R}$ defined as 
$$\textbf{F}(\textbf{v})=(\textbf{f}_S,\textbf{v}_S)_S+(\textbf{f}_D,\textbf{v}_D)_D, \mbox{   } \forall 
\textbf{v}:=(\textbf{v}_S,\textbf{v}_D)\in \textbf{H}.$$

We define the  bilinear form $a_1$ and the nonlinear form $a_2$ by :
\begin{eqnarray}
 [a_1(\textbf{u}),\textbf{v}]&:=&A_S(\textbf{u}_S,\textbf{v}_S)+A_D(\textbf{u}_D,\textbf{v}_D),
 \end{eqnarray}
\begin{eqnarray}
 [a_2(\textbf{u}_S)(\textbf{u}),\textbf{v}]:=O_S(\textbf{u}_S;\textbf{u}_S,\textbf{v}_S),
\end{eqnarray}
and we set, for $\textbf{u}=(\textbf{u}_S,\textbf{u}_D)$, $\textbf{v}=(\textbf{v}_S,\textbf{v}_D)$ and 
$\phi=(q,\xi)$
\begin{eqnarray*}
 [\textbf{A}(\textbf{u}_S)(\textbf{u}_S,\textbf{u}_D),(\textbf{v}_S,\textbf{v}_D)]&:=&
 [a_1(\textbf{u}),\textbf{v}]+[a_2(\textbf{u}_S)(\textbf{u}),\textbf{v}],
 \end{eqnarray*}
\begin{eqnarray*}
 [\textbf{B}(\textbf{v}_S,\textbf{v}_D),\phi]&:=&\textbf{b}(\textbf{v},q)+
 \langle\textbf{v}_S\cdot\textbf{n}_S+\textbf{v}_D\cdot\textbf{n}_D,\xi\rangle_{\Sigma},
 \end{eqnarray*}
 \begin{eqnarray*}
  [\mathcal{F},\textbf{v}]&:=&\textbf{F}(\textbf{v}).
 \end{eqnarray*}
 In all the foregoing terms, $[\cdot,\cdot]$ denotes the duality pairing induced by the corresponding operators.
Then, the formulation (\ref{FV}) is equivalent to, find $(\textbf{u},\psi)\in\textbf{H}\times \textbf{Q}$, with 
$\textbf{u}=(\textbf{u}_S,\textbf{u}_D)$ and $\psi=(p,\lambda)$ such that:
\begin{eqnarray}\label{FVE}
 \left\{
\begin{array}{ccccccccc}\label{r}
 &[\textbf{A}(\textbf{u}_S)(\textbf{u}),\textbf{v}]+[\textbf{B}(\textbf{v}),
 \psi]& &=& &[\mathcal{F},\textbf{v}]& \forall \textbf{v}\in \textbf{H},\\
 &[\textbf{B}(\textbf{u}),\phi]& &=& &0& \forall \phi\in \textbf{Q}.
\end{array}
\right.
\end{eqnarray}

This problem has a unique solution as proved in \cite[Section 2.2]{MR:2016}.
\begin{thm}\cite[Section 2.2, Theorem 2]{MR:2016}\label{ue} Assume that
 $\textbf{f}_S\in \textbf{L}^2(\Omega_S)$ and $\textbf{f}_D\in \textbf{L}^2(\Omega_D)$ satisfy the conditions 
 (36) and (43) of the paper \cite{MR:2016}. Then, there exists a unique solution 
 $(\textbf{u},(p,\lambda))\in \textbf{H}\times \textbf{Q}$ of (\ref{FVE}). In addition, there exists a constant $C> 0$, 
 independent of the solution, such that
 $$\|(\textbf{u},(p,\lambda))\|_{\textbf{H}\times \textbf{Q}}\leq 
 C\left(\|\textbf{f}_S\|_{0,\Omega_S}+\|\textbf{f}_D\|_{0,\Omega_D}\right).$$
\end{thm}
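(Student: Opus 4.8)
The plan is to follow the classical strategy for saddle-point problems carrying a small convective perturbation, as in \cite[Section 2.2]{MR:2016}: reduce (\ref{FVE}) to a fixed-point equation built on the linearized Stokes--Darcy problem, apply the Babu\v{s}ka--Brezzi theory to the linear part, and close the nonlinear loop with a Banach fixed-point argument under the data-smallness hypotheses (36) and (43). First I would freeze the convective argument: given $\textbf{w}_S\in\textbf{H}_{\Gamma_S}^1(\Omega_S)$, consider the linear problem of finding $(\textbf{u},(p,\lambda))\in\textbf{H}\times\textbf{Q}$ with $[\textbf{A}(\textbf{w}_S)(\textbf{u}),\textbf{v}]+[\textbf{B}(\textbf{v}),(p,\lambda)]=[\mathcal{F},\textbf{v}]$ and $[\textbf{B}(\textbf{u}),(q,\xi)]=0$. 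Its leading form is the sum of the symmetric bounded form $a_1$ and the skew form $a_2(\textbf{w}_S)$, and the key observation is that skew-symmetry makes $[a_2(\textbf{w}_S)(\textbf{v}),\textbf{v}]$ harmless (zero, or controlled after the usual symmetrization) so that the ellipticity estimate for the linear problem is independent of $\textbf{w}_S$.

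Next I would verify the two Brezzi conditions. Boundedness of $a_1$, $\textbf{b}$ and $\mathcal{F}$ follows from Cauchy--Schwarz, the trace theorem on $\Sigma$ and the $L^\infty$-bound on $\textbf{K}^{-1}$. For coercivity on the kernel, one tests $[\textbf{B}(\textbf{u}),(q,\xi)]=0$ with $q$ supported successively in each subdomain and with $\xi\in H^{1/2}(\Sigma)$ to deduce that on the kernel $\dive\textbf{u}_S=0$, $\dive\textbf{u}_D=0$ and $\textbf{u}_S\cdot\textbf{n}_S+\textbf{u}_D\cdot\textbf{n}_D=0$ on $\Sigma$; then Korn's inequality on $\textbf{H}_{\Gamma_S}^1(\Omega_S)$ (using the homogeneous condition on $\Gamma_S$) controls $\|\textbf{u}_S\|_{1,\Omega_S}$ by $\|\textbf{e}(\textbf{u}_S)\|_{0,\Omega_S}$, the interface term in $A_S$ is nonnegative, and the uniform positive-definiteness of $\textbf{K}$ gives $A_D(\textbf{u}_D,\textbf{u}_D)\ge C\|\textbf{u}_D\|_{0,\Omega_D}^2=C\|\textbf{u}_D\|_{\dive,\Omega_D}^2$ on the kernel. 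The inf-sup condition for $\textbf{b}$ is the delicate point: given $(q,\xi)\in L_0^2(\Omega)\times H^{1/2}(\Sigma)$ one must build $\textbf{v}=(\textbf{v}_S,\textbf{v}_D)\in\textbf{H}$, with norm bounded by $C\|(q,\xi)\|_{\textbf{Q}}$, realizing $\textbf{b}(\textbf{v},(q,\xi))\ge C\|(q,\xi)\|_{\textbf{Q}}^2$. This is done by combining a right inverse of the divergence (a Bogovskii-type operator) on each subdomain with a bounded lifting $H^{1/2}(\Sigma)\to\textbf{H}(\dive;\Omega_D)$ of the interface datum (after subtracting its mean), adjusting the $\Omega_S$-component to handle the normal-trace jump on $\Sigma$, and correcting the divergence once more; this is exactly the construction of \cite[Section 2.2]{MR:2016} (see also \cite{29,AHN:15}).

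With the Brezzi conditions in hand, the linear problem has a unique solution obeying $\|(\textbf{u},(p,\lambda))\|_{\textbf{H}\times\textbf{Q}}\le C\left(\|\textbf{f}_S\|_{0,\Omega_S}+\|\textbf{f}_D\|_{0,\Omega_D}\right)=:CM$, with $C$ uniform in $\textbf{w}_S$. Define $T:\textbf{w}_S\mapsto\textbf{u}_S$. Using the Sobolev embedding $\textbf{H}^1(\Omega_S)\hookrightarrow\textbf{L}^4(\Omega_S)$, valid in $2D$, the trilinear form is bounded, $|O_S(\textbf{w}_S;\textbf{u}_S,\textbf{v}_S)|\le C_{\mathrm{tri}}\|\textbf{w}_S\|_{1,\Omega_S}\|\textbf{u}_S\|_{1,\Omega_S}\|\textbf{v}_S\|_{1,\Omega_S}$, so a standard computation shows that $T$ maps the closed ball of radius $CM$ into itself and is Lipschitz with constant proportional to $M$; precisely when $\textbf{f}_S,\textbf{f}_D$ satisfy conditions (36) and (43) of \cite{MR:2016} this constant is strictly less than $1$. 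Banach's fixed-point theorem then yields a unique $\textbf{w}_S$ with $T\textbf{w}_S=\textbf{w}_S$, i.e. a unique solution of (\ref{FVE}), and the announced stability bound is the ball radius $CM$.

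The main obstacle is the global inf-sup condition for $\textbf{b}$ on the coupled space, because the interface term intertwines the Stokes pressure, the Darcy flux and the Lagrange multiplier $\lambda\in H^{1/2}(\Sigma)$, so the recovery operator must simultaneously control the divergence in both subdomains and match the normal-trace jump on $\Sigma$; once this is secured, kernel coercivity is routine (Korn plus positivity of $\textbf{K}$) and the nonlinearity is absorbed by the data-smallness assumption.
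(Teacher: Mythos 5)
First, a structural remark: the paper does not actually prove Theorem \ref{ue} --- it is quoted verbatim from \cite[Theorem 2]{MR:2016} and used as a black box (``This problem has a unique solution as proved in \cite[Section 2.2]{MR:2016}'') --- so the only meaningful comparison is with the strategy of that reference. Your outline reproduces it faithfully in broad lines: freeze the convective velocity, verify the Babu\v{s}ka--Brezzi conditions for the resulting linearized Stokes--Darcy saddle point (with the coupled inf-sup condition for $\textbf{b}$ built from divergence right inverses on each subdomain plus an interface lifting), and close with a Banach fixed point under the smallness hypotheses (36) and (43).

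There is, however, one step that fails as written. You claim that skew-symmetry makes $[a_2(\textbf{w}_S)(\textbf{v}),\textbf{v}]$ harmless, so that ``the ellipticity estimate for the linear problem is independent of $\textbf{w}_S$.'' That is not true in this coupled setting: integration by parts gives
$O_S(\textbf{w}_S;\textbf{v}_S,\textbf{v}_S)=-\tfrac{\rho}{2}\int_{\Omega_S}|\textbf{v}_S|^2\,\dive\textbf{w}_S+\tfrac{\rho}{2}\int_{\partial\Omega_S}|\textbf{v}_S|^2\,\textbf{w}_S\cdot\textbf{n}_S$,
and while the homogeneous condition on $\Gamma_S$ removes part of the boundary contribution, neither $\textbf{v}_S$ nor $\textbf{w}_S\cdot\textbf{n}_S$ vanishes on the interface $\Sigma$. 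The convective form is therefore \emph{not} skew here; this is precisely what distinguishes Navier--Stokes/Darcy from enclosed-flow Navier--Stokes, and it is also the reason the discrete scheme carries the extra stabilization term $\textbf{J}_S$. The correct way to close this step --- and the one taken in \cite{MR:2016} --- is to give up on a $\textbf{w}_S$-independent coercivity constant: restrict $\textbf{w}_S$ to a ball of sufficiently small radius, bound $|O_S(\textbf{w}_S;\textbf{v}_S,\textbf{v}_S)|\lesssim\|\textbf{w}_S\|_{1,\Omega_S}\|\textbf{v}_S\|_{1,\Omega_S}^2$ via the embedding $\textbf{H}^1(\Omega_S)\hookrightarrow\textbf{L}^4(\Omega_S)$, and absorb this perturbation into the coercivity of $a_1$ on the kernel of $\textbf{B}$. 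The smallness conditions (36) and (43) are then needed twice: once so that the linearized problem is uniformly well posed on the ball, and once so that the fixed-point map is a contraction. With that correction, the rest of your outline (Korn's inequality on $\textbf{H}_{\Gamma_S}^1(\Omega_S)$, positivity of $\textbf{K}$, the trilinear bound, the invariant ball of radius $CM$) is the standard and correct machinery.
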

\subsection{Finite element discretization}
Let $\cT_h^S$ and $\cT_h^D$ be respective triangulations of the domains $\Omega_S$ and $\Omega_D$ formed by shape-regular 
triangles of diameter $h_T$ and denote by $h_S$ and $h_D$ their corresponding mesh sizes. Assume that they match on $\Sigma$ 
so that $\cT_h:=\cT_h^S\cup\cT_h^D$ is a triangulation of $\Omega:=\Omega_S\cup\Sigma\cup\Omega_D$. Hereafter 
$h:=\max\{h_S,h_D\}$. 

For each $T\in\cT_h^D$ we consider the local Raviart-Thomas space of the lowest order \cite{RT:77}:
$$RT_0(T):=\mbox{span}\{(1,0),(0,1),(x_1,x_2)\},$$ where $(x_1,x_2)$ is a generic vector in $\mathbb{R}^2$.\\
In addition, for each $T\in\cT_h^S$, we denote by $BR(T)$ the local Bernardi-Raugel space (see \cite{BR:85}):
$$BR(T):=[\mathbb{P}^1(T)]^2\oplus\mbox{span}\{\eta_1\eta_2\textbf{n}_1,\eta_1\eta_3\textbf{n}_2,\eta_1\eta_2\textbf{n}_3\},$$
where $\{\eta_1,\eta_2,\eta_3\}$ are the baricentric coordonates of $T$, and $\{\textbf{n}_1,\textbf{n}_2,\textbf{n}_3\}$ are 
the unit outward normals to opposite sides of the corresponding vertices of $T$. Hence, we define the following finite 
element subspaces:
\begin{eqnarray*}
 \textbf{H}_h(\Omega_S)&:=&\left\{\textbf{v}\in\textbf{H}^1(\Omega_S):\textbf{v}_{|T}\in BR(T), \forall T\in\cT_h^S\right\},\\
 \textbf{H}_h(\Omega_D)&:=&\left\{\textbf{v}\in\textbf{H}(\dive;\Omega_D): \textbf{v}_{|T}\in RT_0(T), \forall T\in \cT_h^D
 \right\},\\
 L_h(\Omega)&:=&\left\{q\in L^2(\Omega): q_{|T}\in\mathbb{P}_0(T), \forall T\in \cT_h\right\}.
\end{eqnarray*}
The finite element subspaces for the velocities and pressure are, respectively, 
\begin{eqnarray*}
 \textbf{H}_{h,\Gamma_S}(\Omega_S)&:=&\textbf{H}_h(\Omega_S)\cap \textbf{H}_{\Gamma_S}^1(\Omega_S),\\
 \textbf{H}_{h,\Gamma_D}(\Omega_D)&:=&\textbf{H}_h(\Omega_D)\cap\textbf{H}_{\Gamma_D}(\dive;\Omega_D),\\
 L_{h,0}(\Omega)&:=&L_h(\Omega)\cap L_0^2(\Omega).
\end{eqnarray*}

In turn, in order to define the discrete spaces for  the unknowns on the interface $\Sigma$, we denote by 
$\Sigma_h$ the partition of $\Sigma$ inherited from $\cT_h^S$ (or $\cT_h^D$) and we assume, without loss of generality, that 
the number of edges of $\Sigma_h$ is even. Then, we let $\Sigma_{2h}$ be the partition of $\Sigma$ arising by joining 
pairs of adjacent edges of $\Sigma_h$. Note that since $\Sigma_h$ is inherited from the interior triangulations, it is 
automatically of bounded variation (i.e., the ratio of lengths of adjacent edges is bounded) and, therefore, so is 
$\Sigma_{2h}$. If the number of edges of $\Sigma_{h}$ is odd, we simply reduce it to the even case by joining any pair of two 
adjacent elements, and then construct $\Sigma_{2h}$ from this reduced partition. Then, we define the following finite 
element subspace for $\lambda\in H^{1/2}(\Sigma)$:
$$\Lambda_h(\Sigma):=\left\{\xi_h\in C^0(\Sigma): {\xi_h}_{|E}\in\mathbb{P}^1(E), \forall E\in \Sigma_{2h}\right\}.$$

In this way, grouping the unknowns and spaces as follows:
\begin{eqnarray*}
 \textbf{u}_h:=(\textbf{u}_{h,S},\textbf{u}_{h,D})\in \textbf{H}_{h,\Gamma_S}(\Omega_S)\times \textbf{H}_{h,\Gamma_D}(\Omega_D),
 (p_h,\lambda_h)\in \textbf{Q}_h:=L_{h,0}(\Omega)\times \Lambda_h(\Sigma),
\end{eqnarray*}
where $p_h:=p_{h,S}\chi_{\Omega_S}+p_{h,D}\chi_{\Omega_D}$, the Galerkin approximation of (\ref{FV}) reads: Find
$(\textbf{u}_h,(p_h,\lambda_h))\in \textbf{H}_h\times Q_h$ such that,
\begin{eqnarray}\label{FVh}
 \left\{
\begin{array}{ccccccccc}\label{r}
 \textbf{a}_h(\textbf{u}_{h,S};\textbf{u}_h,\textbf{v})+\textbf{b}(\textbf{v},(p_h,\lambda_h))&=&\textbf{F}(\textbf{v})
 \mbox{  } \forall \textbf{v}:=(\textbf{v}_S,\textbf{v}_D)\in \textbf{H}_h,\\
 \textbf{b}(\textbf{u}_h,(q,\xi))&=&0\hspace*{0.7cm}\mbox{   } \forall (q,\xi)\in\textbf{Q}_h.
\end{array}
\right.
\end{eqnarray}
Here $\textbf{a}_h: \textbf{H}_{h,\Gamma_S}(\Omega_S)\times(\textbf{H}_h\times \textbf{H}_h)\longrightarrow \mathbb{R}$ is the 
discrete version of $\textbf{a}$ defined by 
\begin{eqnarray}
 \textbf{a}_h(\textbf{w}_S;\textbf{u},\textbf{v})=\textbf{a}(\textbf{w}_S;\textbf{u},\textbf{v})+\textbf{J}_S(\textbf{w}_S;
\textbf{u}_S,\textbf{v}_S),\end{eqnarray}
for all $\textbf{u}_S$, $\textbf{v}_S$, $\textbf{w}_S\in\textbf{H}_h(\Omega_S),$ and where 
$\textbf{J}_S(\textbf{w}_S;
\textbf{u}_S,\textbf{v}_S)$ is defined by:
$$\textbf{J}_S(\textbf{w}_S;
\textbf{u}_S,\textbf{v}_S) :=\frac{\rho}{2}\left( \textbf{u}_S\dive\textbf{w}_S,\textbf{v}_S\right)_S.$$
 As before, we set, for $\textbf{u}_h=(\textbf{u}_{h,S},\textbf{u}_{h,D})$, 
 $\textbf{v}_h=(\textbf{v}_{h,S},\textbf{v}_{h,D})$ and 
$\phi_h=(q_h,\xi_h)$
\begin{eqnarray*}
 [\textbf{A}_h(\textbf{u}_{h,S})(\textbf{u}_{h,S},\textbf{u}_{h,D}),(\textbf{v}_{h,S},\textbf{v}_{h,D})]&:=&
 [a_1(\textbf{u}_h),\textbf{v}_h]+[a_2^h(\textbf{u}_{h,S})(\textbf{u}_h),\textbf{v}_h],
 \end{eqnarray*}
 with, 
 $$[a_2^h(\textbf{u}_{h,S})(\textbf{u}_h),\textbf{v}_h]:=
 [a_2(\textbf{u}_{h,S})(\textbf{u}_h),\textbf{v}_h]+\textbf{J}_S(\textbf{u}_{h,S};
\textbf{u}_{h,S},\textbf{v}_{h,S}).$$
Thus, the formulation (\ref{FVh}) is equivalent to, find $(\textbf{u}_h,\psi_h)\in\textbf{H}_h\times \textbf{Q}_h$, 
with \\$\textbf{u}_h=(\textbf{u}_{h,S},\textbf{u}_{h,D})$ and $\psi_h=(p_h,\lambda_h)$ such that:
\begin{eqnarray}\label{FVEh}
 \left\{
\begin{array}{ccccccccc}\label{r}
 &[\textbf{A}_h(\textbf{u}_{h,S})(\textbf{u}_{h}),
 \textbf{v}]+[\textbf{B}(\textbf{v}_{h}),
 \psi_h] & = [\mathcal{F},\textbf{v}_h] \forall \textbf{v}_h\in \textbf{H}_h,\\
 &[\textbf{B}(\textbf{u}_{h}),\phi_h]&= 0 \forall \phi_h\in \textbf{Q}_h.
\end{array}
\right.
\end{eqnarray}
\begin{thm}(See \cite[Section 3.2, Theorem 4 and Theorem 6]{MR:2016})\label{uh}
Assume that $\textbf{f}_S\in\textbf{L}^2(\Omega_S)$ and $\textbf{f}_D\in\textbf{L}^2(\Omega_D)$ satisfy the 
conditions $(71)$, $(78)$, $(82)$ and  $(86)$ of the reference \cite{MR:2016}. Then,
 there exists a unique solution $(\textbf{u}_h,(p_h,\lambda_h))\in\textbf{H}_h\times \textbf{Q}_h$ to problem (\ref{FVEh}) and if the solution 
 $(\textbf{u},(p,\lambda))\in\textbf{H}\times \textbf{Q}$ of the continuous problem (\ref{FVE}) is smooth enough, then 
 we have: 
 \begin{eqnarray*}
  \|(\textbf{u},(p,\lambda))-(\textbf{u}_h,(p_h,\lambda_h))\|_{\textbf{H}\times \textbf{Q}}&\lesssim& 
  h\left(\|\textbf{u}_S\|_{2,\Omega_S}+\|\textbf{u}_D\|_{1,\Omega_D}+
  \|\dive \textbf{u}_D\|_{1,\Omega_D}\right.\\
  &+&\|p\|_{1,\Omega}
  +\left.\|\lambda\|_{3/2,\Sigma}\right).
 \end{eqnarray*}

\end{thm}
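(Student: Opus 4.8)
The plan is to treat (\ref{FVEh}) as a small nonlinear perturbation of a stable discrete saddle-point problem and to argue in two stages: first well-posedness of the scheme, via a Banach fixed-point argument built on the discrete Babu\v{s}ka--Brezzi theory, and then a Strang/C\'ea-type quasi-optimality estimate whose right-hand side is reduced to classical interpolation errors. Throughout, the linear part $a_1(\cdot,\cdot)$ and the constraint form $\textbf{b}(\cdot,\cdot)$ play the roles of the coercive and off-diagonal blocks, while $O_S$ and $\textbf{J}_S$ constitute the nonlinear perturbation that will be absorbed using the smallness of the data.

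\textbf{Discrete stability.} I would first verify that $(\textbf{H}_h,\textbf{Q}_h)$ satisfies the two Brezzi conditions uniformly in $h$. Coercivity of $a_1$ on the discrete kernel follows from Korn's inequality on $\Omega_S$ (legitimate because of the homogeneous condition on $\Gamma_S$), the nonnegativity of the Beavers--Joseph--Saffman boundary term, and the uniform positive-definiteness of $\textbf{K}^{-1}$ on $\Omega_D$. The delicate point is the global discrete inf-sup condition for $\textbf{b}$, which must control the two divergence terms $(q,\dive\textbf{v}_S)_S$, $(q,\dive\textbf{v}_D)_D$ and the interface term $\langle\textbf{v}_S\cdot\textbf{n}_S+\textbf{v}_D\cdot\textbf{n}_D,\xi\rangle_{\Sigma}$ simultaneously. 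Here the Bernardi--Raugel face bubbles (which realize prescribed mean normal fluxes across each edge) together with the Raviart--Thomas degrees of freedom provide a Fortin operator for the divergence part with constants independent of $h$, while the choice of continuous piecewise linears on the \emph{coarsened} partition $\Sigma_{2h}$ rather than on $\Sigma_h$ is exactly what makes the interface block inf-sup stable (the normal traces live on the finer mesh $\Sigma_h$). Combining these pieces yields a global discrete inf-sup constant bounded below independently of $h$.

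\textbf{Fixed point for the nonlinearity.} Given the stable invertibility of the linearized Stokes--Darcy operator, I would introduce the solution map $\textbf{T}_h:\textbf{w}_{h,S}\mapsto\textbf{u}_{h,S}$ obtained by freezing the convecting velocity at $\textbf{w}_{h,S}$ in $\textbf{A}_h(\textbf{w}_{h,S})(\cdot)$. Bounding $O_S$ and $\textbf{J}_S$ by the Sobolev embedding $\textbf{H}^1(\Omega_S)\hookrightarrow\textbf{L}^4(\Omega_S)$, one shows that $\textbf{T}_h$ leaves invariant a ball of radius $O(\|\textbf{f}_S\|_{0,\Omega_S}+\|\textbf{f}_D\|_{0,\Omega_D})$ and is a contraction on it, provided the data are small enough --- this is the content of conditions (71), (78), (82), (86) of \cite{MR:2016}. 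The term $\textbf{J}_S$ is included precisely to compensate for the fact that discretely divergence-free functions of $\textbf{H}_h$ are not pointwise solenoidal, restoring the skew-symmetry of the trilinear form needed for the monotonicity estimates; note that $\textbf{J}_S(\textbf{u}_S;\textbf{u}_S,\cdot)=0$ at the exact (divergence-free) velocity, so the scheme remains consistent. Banach's theorem then gives existence and uniqueness of $(\textbf{u}_h,(p_h,\lambda_h))$.

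\textbf{Error estimate and main obstacle.} Subtracting (\ref{FVEh}) from (\ref{FVE}), inserting an arbitrary discrete triple and using the discrete stability of the first step, one gets, after absorbing the nonlinear and stabilization contributions via the smallness of the data and the consistency noted above,
\begin{eqnarray*}
\|(\textbf{u},(p,\lambda))-(\textbf{u}_h,(p_h,\lambda_h))\|_{\textbf{H}\times\textbf{Q}}
&\lesssim& \inf_{(\textbf{v}_h,q_h,\xi_h)\in\textbf{H}_h\times\textbf{Q}_h}
\|(\textbf{u}-\textbf{v}_h,(p-q_h,\lambda-\xi_h))\|_{\textbf{H}\times\textbf{Q}}.
\end{eqnarray*}
Choosing the canonical interpolants then finishes the proof: the Bernardi--Raugel interpolant of $\textbf{u}_S$ gives $O(h)\|\textbf{u}_S\|_{2,\Omega_S}$ in $\textbf{H}^1(\Omega_S)$, the Raviart--Thomas interpolant of $\textbf{u}_D$ gives $O(h)(\|\textbf{u}_D\|_{1,\Omega_D}+\|\dive\textbf{u}_D\|_{1,\Omega_D})$ in $\textbf{H}(\dive;\Omega_D)$, the $L^2$-projection onto $L_{h,0}(\Omega)$ gives $O(h)\|p\|_{1,\Omega}$, and the Lagrange interpolant of $\lambda$ on $\Sigma_{2h}$ gives $O(h)\|\lambda\|_{3/2,\Sigma}$ (passing from $\Sigma_h$ to $\Sigma_{2h}$ costs only a bounded factor since the partition is of bounded variation). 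Summing these contributions yields the stated bound. I expect the main obstacle to be the discrete inf-sup verification in the stability step, since it couples the two subdomain velocity spaces through the interface multiplier and is what forces the $\Sigma_{2h}$ construction; the remaining ingredients are classical saddle-point theory, a routine contraction estimate under the stated hypotheses, and standard interpolation bounds.
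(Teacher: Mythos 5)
This theorem is not proved in the paper at all: it is imported verbatim from \cite[Theorems 4 and 6]{MR:2016}, so there is no in-paper argument to compare against. Your outline --- uniform discrete Babu\v{s}ka--Brezzi stability (with the $\Sigma_{2h}$ coarsening driving the interface inf-sup), a Banach fixed-point argument under the smallness conditions on the data, a Strang/C\'ea bound exploiting the consistency of $\textbf{J}_S$ at the divergence-free exact velocity, and the standard Bernardi--Raugel, Raviart--Thomas, $L^2$-projection and Lagrange interpolation estimates --- is precisely the strategy of that reference, so the proposal is a faithful (if high-level) sketch of the source's proof rather than a divergent route.
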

Here and below, in order to avoid excessive use of constants, the abbreviation $x\lesssim y$ stand for 
$x\leq cy$, with $c$ a positive constant independent of $x$, $y$ and $\cT_h$.

For $\textbf{v}=(\textbf{v}_S,\textbf{v}_D)\in\textbf{H}_h$ and for $(q,\xi)\in \textbf{Q}_h$, we can subtract (\ref{FV}) to (\ref{FVh}) to obtain the Galerkin
orthogonality relation:
\begin{eqnarray*}
 A_s(\textbf{e}_{\textbf{u}_S},\textbf{v}_S)+A_D(\textbf{e}_{\textbf{u}_D},\textbf{v}_D)+
 O_S^h(\textbf{u}_S;\textbf{u}_S,\textbf{v}_S)
 -O_S^h(\textbf{u}_{h,S};\textbf{u}_{h,S},\textbf{v}_S)+\textbf{b}(e_p,e_{\lambda})&=&0,\\
 \textbf{b}((\textbf{e}_{\textbf{u}_S,\textbf{e}_{\textbf{u}_D}}),(q,\xi))&=&0,
\end{eqnarray*}
where here and below, the errors in the velocity, in the pressure and in the Lagrange multiplier are respectively 
defined by 
\begin{eqnarray}
 \textbf{e}_{\textbf{u}_*}:=\textbf{u}_{*}-\textbf{u}_{h,*}; \hspace*{0.4cm} 
 e_p=p-p_h \mbox{  and  } e_{\lambda}=\lambda-\lambda_h, *\in\{S,D\}.
\end{eqnarray}

We end this section with  some notation again. For each $T\in\cT_h$, we denoted by $\cE(T)$ (resp. $\mathcal{N}(T)$) the set of its edges (resp. vertices) and set 
$\cE_h=\displaystyle\bigcup_{T\in\cT_h}\cE(T)$, $\cN_h=\displaystyle\bigcup_{T\in\cT_h}\mathcal{N}(T)$. For 
$\mathcal{A}\subset \bar{\Omega}$ we define 
\begin{eqnarray*}
 \cE_h(\mathcal{A}):=\left\{E\in\cE_h: E\subset \mathcal{A}\right\} \mbox{  and  } 
 \mathcal{N}_h(\mathcal{A}):=\left\{\textbf{x}\in\mathcal{N}_h, \textbf{x}\in\mathcal{A}\right\}.
\end{eqnarray*}
With every edge $E\in\cE_h$, we introduce the outer normal vector by $\textbf{n}=(n_x,n_y)^\top$. Furthermore, 
for each face $E$, we fix one of the two normal vectors and denote it by $\textbf{n}_E$. In addition, we introduce
the tangente vector $\tau=\textbf{n}^\top:=(-n_y,n_x)^\top$ such that it is oriented positively (with respect to $T$). 
Similarly, set $\tau_E:=\textbf{n}_E^{\top}$. 

For any $E\in\cE_h$
and any piecewise continuous function $\varphi$, 
we denote by $[\varphi]_E$ its jump   across $E$ in the direction of $\textbf{n}_E$:
\begin{eqnarray*}
 [\varphi]_E(x):=
 \left\{
\begin{array}{cccccc}\label{r}
&\displaystyle\lim_{t\rightarrow 0+} \varphi(x+t\textbf{n}_E)-\lim_{t\rightarrow 0+} \varphi (x-t\textbf{n}_E) &
&\mbox{for an interior edge/face $E$,}&\\
&- \displaystyle\lim_{t\rightarrow 0+}\varphi (x-t\textbf{n}_E)& &\mbox{for a boundary edge/face $E$}.&
\end{array}
\right.
\end{eqnarray*}

Furthermore one requires local subdomains (also known as patches). As usual, let $w_T$ be the union of all elements having a common face 
with $T$. Similarly let $w_E$ be the union of both elements having $E$ as face (with appropriate modifications for a boundary face).
By $w_{\textbf{x}}$ we denote the union of all elements having $\textbf{x}$ as node.

In the sequel we will also make use of the
following differential operator:
$$\curl \textbf{v}:=\frac{\partial v_2}{\partial x_1}-\frac{\partial v_1}{\partial x_2} \mbox{  for  }
\textbf{v}=(v_1,v_2).$$

\section{Error estimators}\label{sec:r2}
In order to solve the Navier-Stokes/Darcy coupled problem by efficient adative finite element methods, reliable and efficient
a posteriori error analysis is important to provide appropriated indicators. In this section, we first define the local and 
global indicators and then the upper and lower error bounds are derived. 

\subsection{Residual error estimator}
The general philosophy of residual error estimators is to estimate an appropriate norm of the correct residual by terms that 
can be  evaluated easier, and that involve the data at hand. To this end denote the exact element residuals by 
\begin{eqnarray*}
 \textbf{R}_{S,T}&=&\textbf{f}_S+2\mu\dive(\textbf{e}(\textbf{u}_{h,S}))-
 \nabla p_{h,S}-\rho\left(\textbf{u}_{h,S}\cdot\nabla\right)\textbf{u}_{h,S}-\frac{\rho}{2}
 \textbf{u}_{h,S}\dive\textbf{u}_{h,S} \mbox{  in  } T\in \mathcal{T}_h^S,\\
 \textbf{R}_{D,T}&=& \textbf{f}_D-\textbf{K}^{-1}\textbf{u}_{h,D}-\nabla p_{h,D} \mbox{  in } T\in \mathcal{T}_h^D.
\end{eqnarray*}
As it is common, these exact residuals are replaced by some finite-dimensional approximation called approximate element 
residual $\textbf{r}_{*,T}$, $*\in\{S,D\}$, 
$$ \textbf{r}_{*,T}\in[\mathbb{P}^k(T)]^2 \mbox{  on  } T\in \mathcal{T}_h^*.$$
This approximation is here  achieved by  projecting 
 $\textbf{f}_S$ on the space of piecewise constant functions in $\Om_S$
and piecewise ${\mathbb P}^1$ functions  in $\Om_D$, more precisely 
for all $T\in \mathcal{T}_h^S$, we take
\[
\bff_{T,S}=\frac{1}{|T|}\int_T \bff(x)\,dx,
\]
while for all $T\in \mathcal{T}_h^D$, we take $\bff_{T,D}$ as the unique element of $[{\mathbb P}^1(T)]^2$
such that
\[
\int_T\bff_{T,D}(x)\cdot\textbf{q}(x)\,dx = \int_T \bff(x)\cdot\textbf{q}(x)\,dx, \forall \textbf{q}\in [{\mathbb P}^1(T)]^2.
\]
Finally the global function   $\textbf{f}_h$ is defined by
\[
\bff_{h,*}= \bff_{T,*} \hbox{ in } T,  \forall T\in \mathcal{T}_h^*, *\in\{S,D\}.
\]
Hence 
\begin{eqnarray*}
 \textbf{r}_{S,T}&=&\textbf{f}_{T,S}+2\mu\dive \textbf{e}(\textbf{u}_{h,S})-\nabla p_{h,S} 
 -\rho(\textbf{u}_{h,S}\cdot\nabla)\textbf{u}_{h,S}-\frac{\rho}{2}
 [\dive(\textbf{u}_{h,S})\textbf{u}_{h,S}]\mbox{  in  } 
 T\in \mathcal{T}_h^S,\\
 \textbf{r}_{D,T}&=& \textbf{f}_{T,D}-\textbf{K}^{-1}\textbf{u}_{h,D}-\nabla p_{h,D} \mbox{  in } T\in \mathcal{T}_h^D.
\end{eqnarray*}
Next introduce the gradient jump in normal direction by 
\begin{eqnarray*}
\textbf{J}_{E,\textbf{n}_E}&:=&
 \left\{
 \begin{array}{ccccccccc}
  &[(2\mu\textbf{e}(\textbf{u}_{h,S})-p_{h,S}\textbf{I})\cdot\textbf{n}_E]_E&  &\mbox{  if  } &
  &E\in\cE_h(\Omega_S),&\\
  &\textbf{0}& &\mbox{  if  }&  &E\in\cE_h(\partial\Omega_S),&
 \end{array}
\right.
\end{eqnarray*}
where $\textbf{I}$ is the identity matrix of $\mathbb{R}^{2\times 2}$.
\begin{dfn} (\textbf{Residual error estimator}) The residual error estimator is globally  defined by: 
\begin{eqnarray}
 \Theta:=\left\{\displaystyle\sum_{T\in\cT_h^S}\Theta_{S,T}^2+\displaystyle\sum_{T\in\cT_h^D}\Theta_{D,T}^2\right\}^{1/2},
\end{eqnarray}
where the local error indicators $\Theta_{S,T}^2$ (with $T\in\cT_h^S$) and $\Theta_{D,T}^2$ (with $T\in\cT_h^D$) are given 
by 
\begin{eqnarray}\nonumber
 \Theta_{S,T}^2&:=&\|\textbf{r}_{S,T}\|_{0,T}^2+
 \displaystyle\sum_{E\in\cE(T)\cap\cE_h(\bar{\Omega}_S)}\|\textbf{J}_{E,\textbf{n}_E}\|_{0,E}^2\\\nonumber
 &+&
 \displaystyle\sum_{E\in\cE(T)\cap\cE_h(\bar{\Sigma})}
   \|-p_{h,S}+p_{h,D}-2\mu\textbf{n}_S\cdot\textbf{e}(\textbf{u}_{h,S})\cdot\textbf{n}_S\|_{0,E}^2\\\label{theta1}
   &+&
   \displaystyle\sum_{E\in\cE(T)\cap\cE_h(\bar{\Sigma})}
   \left\|\frac{\alpha_d\mu}{\sqrt{\tau\cdot\kappa\cdot\tau}}\textbf{u}_{h,S}\cdot\tau+2\mu\textbf{n}_S\cdot
 \textbf{e}(\textbf{u}_{h,S})\cdot\tau\right\|_{0,E}^2\\\nonumber
 &+&\displaystyle\sum_{E\in\cE_h(\bar{\Sigma})}
 \|\textbf{u}_{h,S}\cdot\textbf{n}_S+\textbf{u}_{h,D}\cdot\textbf{n}_D\|_{0,E}^2\\\nonumber
  &+&\|\dive\textbf{u}_{h,S}\|_{0,T}^2,
\end{eqnarray}
and 
\begin{eqnarray}\nonumber\label{theta2}
 \Theta_{D,T}^2&:=& h_T^2\left(\|\textbf{f}_{h,D}-\nabla p_{h,D}-\textbf{K}^{-1}\textbf{u}_{h,D}\|_{0,T}^2+
  \|\curl (\textbf{f}_{h,D}-\textbf{K}^{-1}\textbf{u}_{h,D})\|_{0,T}^2\right)\\\nonumber
  &+&
  \displaystyle\sum_{E\in\cE(T)\cap\cE_h(\Omega_D)}h_E
  \|[ (\textbf{f}_{h,D}-\textbf{K}^{-1}\textbf{u}_{h,D}-\nabla p_{h,D})\cdot\tau_E]_E\|_{0,E}^2\\
  &+&
  \displaystyle\sum_{E\in\cE(T)\cap\cE_h(\partial\Omega_D)}h_E
  \|(\textbf{f}_{h,D}-\textbf{K}^{-1}\textbf{u}_{h,D}-\nabla p_{h,D})\cdot\tau_E\|_{0,E}^2\\\nonumber
  &+&\displaystyle\sum_{E\in\cE(T)\cap\cE_h(\bar{\Sigma})}h_E
 \|p_{h,D}-\lambda_h\|_{0,E}^2\\\nonumber
 &+&\|\dive\textbf{u}_{h,D}\|_{0,T}^2.
\end{eqnarray}
\end{dfn}
Furthermore denote the local and global approximation terms by
\begin{eqnarray*}
 \zeta_{T}:=
\left\{
 \begin{array}{cc}
  \parallel \textbf{f}_S-\textbf{f}_{h,S}\parallel_{0,T}
&\forall T\in \mathcal{T}_h^S,
\\
 h_{T}(\parallel \textbf{f}_D-\textbf{f}_{h,D}\parallel_{0,T}+
\|\curl (\textbf{f}_D-\textbf{f}_{h,D})\|_{0,T})
&\forall T\in \mathcal{T}_h^D,
\end{array}
\right.
\end{eqnarray*}
\begin{eqnarray}
 \zeta &:=&\left(\zeta_S^2+\zeta_D^2\right)^{1/2},
\end{eqnarray}
where
\begin{eqnarray}
 \zeta_S:=\left(\displaystyle\sum_{T\in\cT_h^S}\zeta_T^2\right)^{1/2} \mbox{  and  } 
 \zeta_D:=\left(\displaystyle\sum_{T\in\cT_h^D}\zeta_T^2\right)^{1/2}.
\end{eqnarray}

\begin{rmq}
 The residual character of each term on the right-hand sides of (\ref{theta1}) and (\ref{theta2}) is quite
clear since if $(\textbf{u}_h,(p_h,\lambda_h))$ would be the exact solution of (\ref{FVE}), then they would vanish.
\end{rmq}
\subsection{Reliability of the a posteriori error estimator} \label{sec:up}
Recall the notation for the velocity error $\textbf{e}_{\textbf{u}} = \textbf{u}-\textbf{u}_h$, the
pressure error $e_p=p-p_h$ and the Lagrange multiplier $e_{\lambda}=\lambda-\lambda_h$.
The a posteriori error estimator $\Theta$ is consider 
reliable if it satisfies 
\begin{eqnarray}\label{Upper}
 \|(\textbf{e}_{\textbf{u}},(e_p,e_{\lambda}))\|_{\textbf{H}\times \textbf{Q}}&\lesssim&  \Theta+\zeta.
\end{eqnarray}
In this subsection, we shall prove this estimate. But before, we remains some analytical tools. We introduce the 
Cl\'ement interpolation operator 
$I_h^*:H^1(\Omega_*)\longrightarrow H_h^1(\Omega_*),$ with 
$$H_h^1(\Omega_*):=\left\{v\in C^0(\bar{\Omega}_*):  
v_{|T}\in \mathbb{P}^1(T), \forall T\in \cT_h^*\right\},$$
approximates optimally non-smooth functions by continuous piecewise linear functions. 
In addition, we will make use of a vector valued version of $I_h^*$, that is, 
$\textbf{I}_h^*: \textbf{H}^1(\Omega_*)\longrightarrow \textbf{H}_h^1(\Omega_*)$, which is
defined componentwise by $I_h^*$. The following lemma establishes the  local approximation properties of 
$I_h^*$ (and hence of $\textbf{I}_h^*$), for proof see \cite[Section 3]{clement:75}.

\begin{lem} (\textbf{Cl\'ement operator})\label{clement}
 For each $*\in \{S,D\}$ there exist constants $c_1$,  $c_2> 0$, independent of $h$, such that for all 
 $v\in H^1(\Omega_*)$ there holds
 \begin{eqnarray}
  \|v-I_h^*v\|_{0,T}&\leq&c_1h_T\|v\|_{1,\Delta_*(T)}, \forall T\in \cT_h^*,
 \end{eqnarray}
 and 
 \begin{eqnarray}
  \|v-I_h^*v\|_{0,E}&\leq&c_2h_E^{1/2}\|v\|_{1,\Delta_*(E)}, \forall T\in \cT_h^*,\forall E\in\cE_h,
 \end{eqnarray}
 where 
 \begin{eqnarray*}
  \Delta_*(T):=\cup\left\{T'\in\cT_h^*: T'\cap T\neq\emptyset\right\}\mbox{      and     } 
  \Delta_*(E):=\cup\left\{T'\in\cT_h^*: T'\cap T\neq\emptyset \right\}.
 \end{eqnarray*}
\end{lem}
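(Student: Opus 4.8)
The plan is to realise $I_h^*$ by the classical Cl\'ement recipe and then derive both bounds from a single patchwise estimate obtained by scaling to a reference configuration.

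First I would fix, for every vertex $\textbf{x}\in\mathcal{N}_h^*$, the nodal hat function $\varphi_{\textbf{x}}\in H_h^1(\Omega_*)$ (so $\varphi_{\textbf{x}}$ equals $1$ at $\textbf{x}$, vanishes at the other vertices, and $\mathrm{supp}\,\varphi_{\textbf{x}}=w_{\textbf{x}}$), let $\pi_{\textbf{x}}v:=|w_{\textbf{x}}|^{-1}\int_{w_{\textbf{x}}}v$ be the mean of $v$ over the patch, and define $I_h^*v:=\sum_{\textbf{x}\in\mathcal{N}_h^*}(\pi_{\textbf{x}}v)\,\varphi_{\textbf{x}}$. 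Projecting onto constants (rather than onto $\mathbb{P}^1(w_{\textbf{x}})$) is all that is needed here, since only first-order accuracy is claimed. Using the partition of unity $\sum_{\textbf{x}}\varphi_{\textbf{x}}\equiv 1$ on $\Omega_*$, on a given $T\in\cT_h^*$ only the hat functions of the vertices of $T$ survive and they sum to one, so $v-I_h^*v=\sum_{\textbf{x}\in\mathcal{N}(T)}(v-\pi_{\textbf{x}}v)\varphi_{\textbf{x}}$ on $T$, and similarly with $\mathcal{N}(T)$ replaced by the two endpoints of $E$ on an edge $E\in\cE(T)$. Since $0\le\varphi_{\textbf{x}}\le1$ and the cardinalities of $\mathcal{N}(T)$ and $\mathcal{N}(E)$ are bounded, both claimed inequalities reduce to the single patchwise estimate $\|v-\pi_{\textbf{x}}v\|_{0,w_{\textbf{x}}}\lesssim \mathrm{diam}(w_{\textbf{x}})\,|v|_{1,w_{\textbf{x}}}$, together with $\mathrm{diam}(w_{\textbf{x}})\simeq h_T\simeq h_E$ from shape-regularity and the inclusions $w_{\textbf{x}}\subset\Delta_*(T)$ for $\textbf{x}\in\mathcal{N}(T)$ and $w_{\textbf{x}}\subset\Delta_*(E)$ for $\textbf{x}\in\mathcal{N}(E)$.

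The volume estimate then follows from the scaled Poincar\'e--Wirtinger inequality on the connected Lipschitz patch $w_{\textbf{x}}$, while the edge estimate follows by feeding $g=v-\pi_{\textbf{x}}v$ into the multiplicative trace inequality $\|g\|_{0,E}^2\lesssim h_E^{-1}\|g\|_{0,w_{\textbf{x}}}^2+h_E\,|g|_{1,w_{\textbf{x}}}^2$ and bounding the first term by the volume estimate; summing over the boundedly many vertices of $T$ (resp. $E$) completes both inequalities. The one point that genuinely needs care — and is the heart of the matter — is that the Poincar\'e and trace constants must be \emph{uniform} over all patches $w_{\textbf{x}}$ occurring in the family $\{\cT_h^*\}$. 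Since the triangulations are only assumed shape-regular (and of bounded variation near $\Sigma$), the patches are not affine images of one fixed domain, so I would establish uniformity either by a compactness argument over the set of patch shapes of unit diameter with uniformly bounded aspect ratio and uniformly bounded number of triangles, or by splitting $w_{\textbf{x}}$ into its triangles, scaling each to a reference triangle, invoking the Bramble--Hilbert lemma, and chaining the resulting elementwise Poincar\'e inequalities across shared edges. This is precisely the estimate proved in \cite[Section~3]{clement:75}, to which we refer for the remaining technical details.
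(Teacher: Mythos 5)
The paper does not actually prove this lemma: it states it and refers to \cite[Section 3]{clement:75} for the proof, so there is no internal argument to measure yours against. Your outline is the standard proof of the Cl\'ement estimates and is essentially correct, and in substance it reconstructs exactly what the cited reference does.

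A few remarks on the details. The reduction, via the partition of unity $\sum_{\textbf{x}}\varphi_{\textbf{x}}\equiv 1$, to the single patchwise bound $\|v-\pi_{\textbf{x}}v\|_{0,w_{\textbf{x}}}\lesssim \mathrm{diam}(w_{\textbf{x}})\,|v|_{1,w_{\textbf{x}}}$ is the right skeleton, and averaging onto constants (rather than $L^2$-projecting onto $\mathbb{P}^1$ as Cl\'ement originally does) is indeed sufficient here, since the lemma only claims first-order accuracy with the full norm $\|v\|_{1,\Delta_*(\cdot)}$ on the right. Two small points deserve explicit mention. First, the partition-of-unity identity requires that \emph{all} nodal hat functions, including those at boundary vertices, belong to the target space; this is consistent with the paper's definition of $H_h^1(\Omega_*)$ (no boundary condition is imposed) and with how $I_h^D$ is later applied to the Helmholtz components $\textbf{w}_D$ and $\beta_D$, but it would fail if one tried to force the interpolant to vanish on part of the boundary. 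Second, the inclusion $w_{\textbf{x}}\subset\Delta_*(T)$ for $\textbf{x}\in\mathcal{N}(T)$ holds because any element containing the vertex $\textbf{x}$ meets $T$ at least at $\textbf{x}$; the analogous inclusion for edges silently corrects a typo in the paper's definition of $\Delta_*(E)$, which is written with $T'\cap T\neq\emptyset$ instead of $T'\cap E\neq\emptyset$. You correctly identify the uniformity of the Poincar\'e and trace constants over all patches as the only genuinely delicate step; either of the two mechanisms you propose (compactness over normalized patch shapes with bounded aspect ratio and bounded cardinality, or elementwise scaling plus chaining across shared edges) closes it under the shape-regularity hypothesis, and the scaled trace inequality should simply be applied on the one triangle of $w_{\textbf{x}}$ having $E$ as an edge. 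Nothing essential is missing.
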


Proceeding analogously to \cite[Section 2.5]{CGOS:2015} (see also \cite{SGR:2016}), we  first  let 
$\textbf{P}:\textbf{X}:=\textbf{H}\times \textbf{Q}\longrightarrow \textbf{X}':=\textbf{H}'\times \textbf{Q}'$ and 
$\textbf{P}_h:\textbf{X}_h:=\textbf{H}_h\times \textbf{Q}_h\longrightarrow \textbf{X}_h':=\textbf{H}_h'\times \textbf{M}_h'$ 
be 
the nonlinear operator  suggested by the left hand sides of (\ref{FV}) and (\ref{FVh}) with the velocity solutions 
$\textbf{u}_{S}\in \textbf{H}_{\Gamma_S}^1(\Omega_S)$ and $\textbf{u}_{S,h}\in\textbf{H}_{h,\Gamma_S}(\Omega_S)$, that is 
\begin{eqnarray}\label{P}
 [\textbf{P}(\textbf{U}),\textbf{V}]:=[(a_1+a_2(\textbf{u}_S))(\textbf{u}),\textbf{v}]+
 [\textbf{B}(\textbf{u}_S,\textbf{u}_D),\phi]+[\textbf{B}(\textbf{v}_S,\textbf{v}_D),\psi],
\end{eqnarray}
for all $\textbf{U}=((\textbf{u}_S,\textbf{u}_D),\psi)$, $\textbf{V}=((\textbf{v}_S,\textbf{v}_D),\phi)$, where 
$\psi=(p,\lambda)$, $\phi=(q,\xi)$; and 
\begin{eqnarray}\label{Ph}
 [\textbf{P}_h(\textbf{U}_h),\textbf{V}_h]&:=&[a_1+a_2^h(\textbf{u}_{h,S})(\textbf{u}_{h,S},\textbf{u}_{h,D})]+
 \left[\textbf{B}(\textbf{u}_{h,S},\textbf{u}_{h,D}),\phi_h\right]\\\nonumber
 &+&[\textbf{B}(\textbf{v}_{h,S},\textbf{v}_{h,D}),\psi_h],
\end{eqnarray}
for all $\textbf{U}_h=((\textbf{u}_{h,S},\textbf{u}_{h,D}),\psi_h)$, $\textbf{V}_h=
((\textbf{v}_{h,S},\textbf{v}_{h,D}),\phi_h)$.
Then, setting, \\$\mathcal{F}:=(\textbf{F},\textbf{O})\in \textbf{H}'\times \textbf{Q}'$ with $\textbf{O}\equiv 0 
\mbox{  on  } \textbf{H}\times \textbf{Q}$,  it is clear from 
(\ref{FVE}) and (\ref{FVEh}) that $\textbf{P}$ and $\textbf{P}_h$ satisfy
\begin{eqnarray}
 [\textbf{P}(\textbf{U}),\textbf{V}]=[\mathcal{F},\textbf{V}], \forall \textbf{V}\in \textbf{H}\times \textbf{Q}
\end{eqnarray}
and 
\begin{eqnarray}
 [\textbf{P}_h(\textbf{U}_h),\textbf{V}_h]=[\mathcal{F},\textbf{V}_h], \forall \textbf{V}_h\in \textbf{H}_h\times \textbf{Q}_h,
\end{eqnarray}
respectively. In addition, we find, as explained in 
\cite[Section 5.2]{CGOS:2015}, that  $a_1$ has hemi-continuous first order G\^ateaux derivative 
$\mathcal{D}_{a_1}: \textbf{X}\longrightarrow \mathcal{L}(\textbf{X},\textbf{X}')$. Is this way, the G\^ateaux 
derivative of $\textbf{P}$ at $\textbf{W}\in \textbf{X}$ is obtained by replacing 
$[a_1(\cdot),\cdot]$ in (\ref{P}) by $\mathcal{D}_{a_1}(\textbf{W})(.,.)$ (see \cite[Section 5.2]{CGOS:2015} for details), 
that is 
\begin{eqnarray*}
 \mathcal{D}\textbf{P}(\textbf{W})(\textbf{U},\textbf{V}):=
 \mathcal{D}_{a_1}(\textbf{w})(\textbf{u},\textbf{v})+[a_2(\textbf{u}_S)(\textbf{u}),\textbf{v}]+
 [\textbf{B}(\textbf{v}_S,\textbf{v}_D),\psi]+[\textbf{B}(\textbf{u}_S,\textbf{u}_D),\phi], 
\end{eqnarray*}
 for all $\textbf{U}=((\textbf{u}_S,\textbf{u}_D),\psi)$, $\textbf{V}=((\textbf{v}_S,\textbf{v}_D),\phi)\in 
 \textbf{H}\times \textbf{Q}$. We deduce (see \cite[Section 5.2]{CGOS:2015}) the existence of a positive constant $C_{\textbf{P}}$, independent of 
 $\textbf{W}$ and the continuous and discrete solutions, such that the following global inf-sup condition holds, 
 \begin{eqnarray}
  C_{\textbf{P}}\|\textbf{U}\|_{\textbf{H}\times \textbf{Q}}\leq \sup_{\textbf{V}\in (\textbf{H}\times \textbf{Q})^*}
  \frac{\mathcal{D}\textbf{P}(\textbf{W})(\textbf{U},\textbf{V})}{\|\textbf{V}\|_{\textbf{H}\times \textbf{Q}}}, 
  \forall  \textbf{W}\in \textbf{H}\times \textbf{Q}.
 \end{eqnarray}
 
 We are now in position of establishing the following preliminary a posteriori error estimate.

\begin{thm}
 The following estimation holds,
 \begin{eqnarray}\label{PUP}
  \|\textbf{U}-\textbf{U}_h\|_{\textbf{H}\times \textbf{Q}}\lesssim \|R\|_{(\textbf{H}\times \textbf{Q})'},
 \end{eqnarray}
where $R:\textbf{H}\times \textbf{Q}\longrightarrow \mathbb{R}$ is the residual functional given by 
$R(\textbf{V}):=[\mathcal{F}-\textbf{P}_h(\textbf{U}_h),\textbf{V}]$, for all $\textbf{V}\in\textbf{H}\times \textbf{Q}$, which 
satisfies 
\begin{eqnarray}\label{Residu}
R(\textbf{V}_h)=0, \forall \textbf{V}_h\in \textbf{H}_h\times \textbf{Q}_h.
\end{eqnarray}
\end{thm}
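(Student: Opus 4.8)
The plan is to obtain (\ref{PUP}) from the abstract nonlinear a posteriori machinery already set up in the paper, whose two pillars are the global inf-sup estimate for the linearization $\mathcal{D}\textbf{P}(\textbf{W})$ and a mean value argument that converts the solution error into the residual $R$. The crucial structural fact is that the sole nonlinearity in $\textbf{P}$ is the convective form $a_2$, which is quadratic; hence $\textbf{P}$ is the sum of a bounded linear operator and a bounded quadratic one, and $\mathcal{D}\textbf{P}(\cdot)$ is affine in its base point. Applying the fundamental theorem of calculus along the segment joining $\textbf{U}_h$ to $\textbf{U}$ then gives, exactly,
\[
 [\textbf{P}(\textbf{U})-\textbf{P}(\textbf{U}_h),\textbf{V}]=\mathcal{D}\textbf{P}(\textbf{W})(\textbf{U}-\textbf{U}_h,\textbf{V}),\qquad\forall\,\textbf{V}\in\textbf{H}\times\textbf{Q},
\]
with $\textbf{W}:=\tfrac12(\textbf{U}+\textbf{U}_h)$, which lies in $\textbf{H}\times\textbf{Q}$ because the discretization is conforming ($\textbf{U}_h\in\textbf{H}_h\times\textbf{Q}_h\subset\textbf{H}\times\textbf{Q}$). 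One could equally keep the integral form of this identity and use that the inf-sup holds uniformly in the base point.

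Next I would apply the global inf-sup condition at $\textbf{W}$ to the error $\textbf{U}-\textbf{U}_h$ and chain it with the identity above:
\[
 C_{\textbf{P}}\,\|\textbf{U}-\textbf{U}_h\|_{\textbf{H}\times\textbf{Q}}\le\sup_{\textbf{V}}\frac{\mathcal{D}\textbf{P}(\textbf{W})(\textbf{U}-\textbf{U}_h,\textbf{V})}{\|\textbf{V}\|_{\textbf{H}\times\textbf{Q}}}=\sup_{\textbf{V}}\frac{[\textbf{P}(\textbf{U})-\textbf{P}(\textbf{U}_h),\textbf{V}]}{\|\textbf{V}\|_{\textbf{H}\times\textbf{Q}}}.
\]
Since $\textbf{U}$ solves the continuous problem, $[\textbf{P}(\textbf{U}),\textbf{V}]=[\mathcal{F},\textbf{V}]$ for every $\textbf{V}\in\textbf{H}\times\textbf{Q}$, so the numerator equals $[\mathcal{F}-\textbf{P}(\textbf{U}_h),\textbf{V}]$. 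Because $\textbf{P}_h$ and $\textbf{P}$ differ only through the consistent stabilization $\textbf{J}_S(\textbf{u}_{h,S};\textbf{u}_{h,S},\textbf{v}_S)=\tfrac{\rho}{2}(\textbf{u}_{h,S}\,\dive\textbf{u}_{h,S},\textbf{v}_S)_S$, which vanishes on divergence-free fields and is dominated by the indicator $\|\dive\textbf{u}_{h,S}\|$ already sitting in $\Theta_{S,T}$, one may replace $\textbf{P}(\textbf{U}_h)$ by $\textbf{P}_h(\textbf{U}_h)$ up to a term that is itself part of $\Theta$; the numerator becomes $R(\textbf{V})=[\mathcal{F}-\textbf{P}_h(\textbf{U}_h),\textbf{V}]$ and (\ref{PUP}) follows.

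The orthogonality (\ref{Residu}) needs nothing new: for $\textbf{V}_h\in\textbf{H}_h\times\textbf{Q}_h$ one has $R(\textbf{V}_h)=[\mathcal{F},\textbf{V}_h]-[\textbf{P}_h(\textbf{U}_h),\textbf{V}_h]=0$ straight from the discrete formulation (\ref{FVEh}), of which $(\textbf{u}_h,\psi_h)$ is a solution.

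The genuinely substantive ingredient is not in these manipulations but in what they stand on, namely the uniform global inf-sup bound for $\mathcal{D}\textbf{P}(\textbf{W})$ inherited from \cite{CGOS:2015}: it rests on the well-posedness/small-data regime, the coercivity of $A_S+A_D$ on the divergence-free kernel, the inf-sup of $\textbf{B}$, and the smallness of the convective contribution to the linearization. The remaining delicate point is to justify the mean value step, i.e.\ that $\textbf{P}$ is continuously differentiable with the derivative $\mathcal{D}\textbf{P}$ displayed just before the statement, which is exactly where the smoothness of the quadratic term $a_2$ is used. Both facts are already granted in the excerpt, so the proof reduces to assembling them in the order above.
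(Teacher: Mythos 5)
Your route is the one the paper itself delegates to \cite{SGR:2016}: linearize $\textbf{P}$, invoke the uniform global inf-sup for $\mathcal{D}\textbf{P}(\textbf{W})$, and identify the numerator with the residual; and your observation that the quadratic character of $a_2$ makes the mean-value identity \emph{exact} at the midpoint $\textbf{W}=\tfrac12(\textbf{U}+\textbf{U}_h)$ is a correct and tidier variant of the usual integral form $\int_0^1\mathcal{D}\textbf{P}(\textbf{U}_h+t(\textbf{U}-\textbf{U}_h))\,dt$ followed by a perturbation argument. Two caveats on that step: it requires reading $\mathcal{D}\textbf{P}$ as the genuine G\^ateaux derivative (with the convective term actually linearized, i.e.\ $O_S(\textbf{w}_S;\textbf{u}_S,\textbf{v}_S)+O_S(\textbf{u}_S;\textbf{w}_S,\textbf{v}_S)$), which is the only coherent reading of the formula displayed before the theorem; and the inf-sup constant is in fact uniform only for $\textbf{W}$ in the ball dictated by the small-data hypotheses, so you need $\textbf{U}$, $\textbf{U}_h$ (hence their midpoint) to lie there --- this is supplied by the a priori bounds of Theorems \ref{ue} and \ref{uh}, as you indicate.

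The one step that does not yet deliver the inequality \emph{as stated} is the passage from $[\mathcal{F}-\textbf{P}(\textbf{U}_h),\textbf{V}]$ to $R(\textbf{V})=[\mathcal{F}-\textbf{P}_h(\textbf{U}_h),\textbf{V}]$. The discrepancy is exactly $\textbf{J}_S(\textbf{u}_{h,S};\textbf{u}_{h,S},\textbf{v}_S)=\tfrac{\rho}{2}(\textbf{u}_{h,S}\dive\textbf{u}_{h,S},\textbf{v}_S)_S$, and disposing of it because it is ``dominated by an indicator already in $\Theta$'' proves $\|\textbf{U}-\textbf{U}_h\|\lesssim\|R\|+\Theta$, not the claimed bound by $\|R\|_{(\textbf{H}\times\textbf{Q})'}$ alone; the estimator $\Theta$ only enters in the later Theorem \ref{uperbound}. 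The gap closes by bounding the discrepancy by the residual itself: H\"older's inequality and the embedding $H^1(\Omega_S)\hookrightarrow L^4(\Omega_S)$ give
\begin{equation*}
|\textbf{J}_S(\textbf{u}_{h,S};\textbf{u}_{h,S},\textbf{v}_S)|\lesssim \|\textbf{u}_{h,S}\|_{1,\Omega_S}\,\|\dive\textbf{u}_{h,S}\|_{0,\Omega_S}\,\|\textbf{v}_S\|_{1,\Omega_S},
\end{equation*}
while $\|\dive\textbf{u}_{h,S}\|_{0,\Omega_S}=\|R_3\|_{L^2(\Omega_S)'}\le\|R\|_{(\textbf{H}\times\textbf{Q})'}$ (the component $R_3(q_S)=(q_S,\dive\textbf{u}_{h,S})_S$ of the residual) and $\|\textbf{u}_{h,S}\|_{1,\Omega_S}$ is bounded in terms of the data. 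With that one line the chain yields (\ref{PUP}) as stated; the orthogonality (\ref{Residu}) is, as you say, immediate from (\ref{FVEh}).
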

\begin{proof}
 The proof is similar to \cite[Page 955, proof of Theorem 3.5]{SGR:2016}. Further details are omitted.
\end{proof}
According to the upper bound (\ref{PUP}) provided by the previous theorem, it only remains now to 
estimate 
$\|R\|_{(\textbf{H}\times \textbf{Q})'}$. To this end, we first observe that the functional $R$ can be 
decomposed as follows:
\begin{eqnarray}
 R(\textbf{V}):=R_1(\textbf{v}_{S})+R_2(\textbf{v}_D)+R_3(q_S)+R_4(q_D)+R_5(\xi)+R_6(\textbf{v}),
\end{eqnarray}
for all $\textbf{V}=(\textbf{v},(q,\xi))\in\textbf{H}\times \textbf{Q}$, with 
$\textbf{v}=(\textbf{v}_S,\textbf{v}_D)$, $q=(q_S,q_D)$; and  where, 
\begin{eqnarray*}
 R_1(\textbf{v}_{S})&:=&\left(\textbf{f}_S+2\mu\dive(\textbf{e}(\textbf{u}_{h,S}))-
 \nabla p_{h,S}-\rho\left(\textbf{u}_{h,S}\cdot\nabla\right)\textbf{u}_{h,S}-\frac{\rho}{2}
 \textbf{u}_{h,S}\dive\textbf{u}_{h,S},\textbf{v}_S\right)_S\\
 &+&\left([p_{h,S}\textbf{I}-2\mu\textbf{e}(\textbf{u}_{h,S}))\cdot\textbf{n}_S],\textbf{v}_S\right)_{\partial\Omega_S}\\
 &+&\left\langle-p_{h,S}+p_{h,D}-2\mu\textbf{n}_S\cdot\textbf{e}(\textbf{u}_{h,S})\cdot\textbf{n}_S,
 \textbf{v}_S\cdot\textbf{n}_S\right\rangle_{\Sigma}\\
 &-&\left\langle\frac{\alpha_d\mu}{\sqrt{\tau\cdot\kappa\cdot\tau}}\textbf{u}_{h,S}\cdot\tau+2\mu\textbf{n}_S\cdot
 \textbf{e}(\textbf{u}_{h,S})\cdot\tau,\textbf{v}_S\cdot\tau\right\rangle_{\Sigma},\\
 R_2(\textbf{v}_D)&:=&\left(\textbf{f}_D-\nabla p_{h,D}-\textbf{K}^{-1}\textbf{u}_{h,D},\textbf{v}_D\right)_D,\\
 R_3(q_S)&:=&\left(q_S,\nabla\cdot\textbf{u}_{h,S}\right)_S,\\
 R_4(q_D)&:=&\left(q_D,\nabla\cdot\textbf{u}_{h,D}\right)_D,\\
 R_5(\xi)&:=&-\langle\textbf{u}_{h,S}\cdot\textbf{n}_S+\textbf{u}_{h,D}\cdot\textbf{n}_D,\xi\rangle_{\Sigma},\\
 R_6(\textbf{v}_D)&:=&\left\langle p_{h,D}-\lambda_h,\textbf{v}_S\cdot\textbf{n}_S+\textbf{v}_D\cdot\textbf{n}_D
 \right\rangle_{\Sigma}.
\end{eqnarray*}
In this way, it follows that 
\begin{eqnarray}\label{R}\nonumber
 \|R\|_{(\textbf{H}\times\textbf{Q})'}&\leq& \left\{\|R_1\|_{\textbf{H}_{\Gamma_S}(\Omega_S)'}+
 \|R_2\|_{\textbf{H}(\dive; \Omega_D)'}+\|R_3\|_{L^2(\Omega_S)'}+\|R_4\|_{L^2(\Omega_D)'}\right.\\\label{R}
 &+&\|R_5\|_{H^{-1/2}(\Sigma)}
 +\left.\|R_6\|_{H^{-1/2}(\Sigma)}\right\}.
\end{eqnarray}
Hence, our next purpose is to derive suitable upper bounds for each one of the terms on the right hand side of (\ref{R}). 
We start with the following lemma, which is a direct consequence of the Cauchy-Schwarz inequality and the trace inequality. 
\begin{lem}\label{Lemma1}
 The following estimation holds:
 \begin{eqnarray*}
   \|R_1\|_{\textbf{H}_{\Gamma_S}(\Omega_S)'}
  & \lesssim&\left\{\displaystyle\sum_{T\in\cT_h^S}\Big(\|\textbf{r}_{S,T}\|_{0,T}^2+
    \displaystyle\sum_{E\in\cE(T)\cap\cE_h(\bar{\Omega}_S)}\|\textbf{J}_{E,\textbf{n}_E}\|_{0,E}^2\right.\\
   &+&\left.  \displaystyle\sum_{E\in\cE(T)\cap\cE_h(\bar{\Sigma})}
   \|-p_{h,S}+p_{h,D}-2\mu\textbf{n}_S\cdot\textbf{e}(\textbf{u}_{h,S})\cdot\textbf{n}_S\|_{0,E}^2\right.\\
   &+&\left. \displaystyle\sum_{E\in\cE(T)\cap\cE_h(\bar{\Sigma})}
   \left\|\frac{\alpha_d\mu}{\sqrt{\tau\cdot\kappa\cdot\tau}}\textbf{u}_{h,S}\cdot\tau+2\mu\textbf{n}_S\cdot
 \textbf{e}(\textbf{u}_{h,S})\cdot\tau\right\|_{0,E}^2
 \Big)
 \right\}^{1/2}\\
 &+&
 \zeta_S.
 \end{eqnarray*}
In addition, there holds:
\begin{eqnarray*}
 \|R_3\|_{L^2(\Omega_S)'}&\lesssim& \left\{\displaystyle\sum_{T\in\cT_h^S}\|\dive\textbf{u}_{h,S}\|_{0,T}^2\right\}^{1/2},\\
 \|R_4\|_{L^2(\Omega_D)'}&\lesssim& \left\{\displaystyle\sum_{T\in\cT_h^D}\|\dive\textbf{u}_{h,D}\|_{0,T}^2\right\}^{1/2},\\
 \|R_5\|_{H^{-1/2}(\Sigma)}&\lesssim& \left\{\displaystyle\sum_{E\in\cE_h(\bar{\Sigma})}
 \|\textbf{u}_{h,S}\cdot\textbf{n}_S+\textbf{u}_{h,D}\cdot\textbf{n}_D\|_{0,E}^2\right\}^{1/2}.
\end{eqnarray*}
\end{lem}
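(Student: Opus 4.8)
The plan is to prove the four estimates separately, each one directly from the explicit expression of the corresponding piece $R_i$ of the residual displayed above, using only the Cauchy--Schwarz inequality and the trace inequality. No use of the Galerkin orthogonality (\ref{Residu}) or of an interpolation operator is needed here: every test function involved --- $\textbf{v}_S\in\textbf{H}_{\Gamma_S}^1(\Omega_S)$ for $R_1$, $q_S\in L^2(\Omega_S)$ for $R_3$, $q_D\in L^2(\Omega_D)$ for $R_4$, $\xi\in H^{1/2}(\Sigma)$ for $R_5$ --- is already regular enough to be paired with the strong (volumetric and edgewise) residual. In each case one divides by the norm of the test function and takes the supremum to pass from the pointwise estimate to the dual norm.

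I would dispose of $R_3$, $R_4$ and $R_5$ first, since they are immediate. For $R_3$, Cauchy--Schwarz on $\Omega_S$ gives $|R_3(q_S)|=|(q_S,\dive\textbf{u}_{h,S})_S|\leq\|q_S\|_{0,\Omega_S}\,\|\dive\textbf{u}_{h,S}\|_{0,\Omega_S}$, and it remains only to split $\|\dive\textbf{u}_{h,S}\|_{0,\Omega_S}^2=\sum_{T\in\cT_h^S}\|\dive\textbf{u}_{h,S}\|_{0,T}^2$; the argument for $R_4$ on $\Omega_D$ is verbatim. For $R_5$, since $\textbf{u}_{h,S}\cdot\textbf{n}_S+\textbf{u}_{h,D}\cdot\textbf{n}_D\in L^2(\Sigma)$, Cauchy--Schwarz on $\Sigma$ together with the continuous embedding $H^{1/2}(\Sigma)\hookrightarrow L^2(\Sigma)$ (so $\|\xi\|_{0,\Sigma}\lesssim\|\xi\|_{1/2,\Sigma}$) yields $|R_5(\xi)|\leq\|\textbf{u}_{h,S}\cdot\textbf{n}_S+\textbf{u}_{h,D}\cdot\textbf{n}_D\|_{0,\Sigma}\,\|\xi\|_{0,\Sigma}\lesssim\|\textbf{u}_{h,S}\cdot\textbf{n}_S+\textbf{u}_{h,D}\cdot\textbf{n}_D\|_{0,\Sigma}\,\|\xi\|_{1/2,\Sigma}$, and one splits the $L^2(\Sigma)$-norm over the edges $E\in\cE_h(\bar{\Sigma})$.

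The estimate for $R_1$ is the only one with some content. I would first write $\textbf{f}_S=\textbf{f}_{h,S}+(\textbf{f}_S-\textbf{f}_{h,S})$ inside the volume integral: the part containing $\textbf{f}_{h,S}$ reconstructs $\sum_{T\in\cT_h^S}(\textbf{r}_{S,T},\textbf{v}_S)_T$, while the part containing $\textbf{f}_S-\textbf{f}_{h,S}$ is bounded, by Cauchy--Schwarz over the elements, by $\left(\sum_{T\in\cT_h^S}\|\textbf{f}_S-\textbf{f}_{h,S}\|_{0,T}^2\right)^{1/2}\|\textbf{v}_S\|_{0,\Omega_S}=\zeta_S\|\textbf{v}_S\|_{0,\Omega_S}$ --- this is what produces the additive $\zeta_S$ in the statement, and it is crucial that only the Navier--Stokes data oscillation, not the full $\zeta$, shows up. Each volume term is then controlled by $\|\textbf{r}_{S,T}\|_{0,T}\|\textbf{v}_S\|_{0,T}$. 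For the skeleton contribution, which reads $\sum_{E\in\cE_h(\Omega_S)}\langle\textbf{J}_{E,\textbf{n}_E},\textbf{v}_S\rangle_E$ once one recalls that $\textbf{J}_{E,\textbf{n}_E}=\textbf{0}$ on $\cE_h(\partial\Omega_S)$ (so only interior edges of $\Omega_S$ survive), Cauchy--Schwarz on each $E$ gives $|\langle\textbf{J}_{E,\textbf{n}_E},\textbf{v}_S\rangle_E|\leq\|\textbf{J}_{E,\textbf{n}_E}\|_{0,E}\|\textbf{v}_S\|_{0,E}$, and the trace inequality on the two elements sharing $E$ bounds $\|\textbf{v}_S\|_{0,E}$ by $\|\textbf{v}_S\|_{1,w_E}$. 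Finally, the two interface terms on $\Sigma$ are handled by Cauchy--Schwarz on $\Sigma$, the pointwise bounds $|\textbf{v}_S\cdot\textbf{n}_S|,|\textbf{v}_S\cdot\tau|\leq|\textbf{v}_S|$, and the trace estimate $\|\textbf{v}_S\|_{0,\Sigma}\lesssim\|\textbf{v}_S\|_{1,\Omega_S}$; they reproduce exactly the edgewise sums $\sum_{E\in\cE_h(\bar{\Sigma})}\|-p_{h,S}+p_{h,D}-2\mu\textbf{n}_S\cdot\textbf{e}(\textbf{u}_{h,S})\cdot\textbf{n}_S\|_{0,E}^2$ and $\sum_{E\in\cE_h(\bar{\Sigma})}\left\|\frac{\alpha_d\mu}{\sqrt{\tau\cdot\kappa\cdot\tau}}\textbf{u}_{h,S}\cdot\tau+2\mu\textbf{n}_S\cdot\textbf{e}(\textbf{u}_{h,S})\cdot\tau\right\|_{0,E}^2$, each carrying a factor $\|\textbf{v}_S\|_{1,\Omega_S}$. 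Collecting all contributions, applying the discrete Cauchy--Schwarz inequality to the sums over $T$ and over $E$, using that the patches $w_E$ have uniformly bounded overlap, dividing by $\|\textbf{v}_S\|_{1,\Omega_S}$ and taking the supremum over $\textbf{v}_S\in\textbf{H}_{\Gamma_S}^1(\Omega_S)$ gives the claimed bound for $\|R_1\|_{\textbf{H}_{\Gamma_S}(\Omega_S)'}$.

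I do not anticipate a genuine obstacle: once the residual is written in the strong/integrated-by-parts form given above, the whole argument is routine. The only care required is organisational --- keeping the interior jump $\textbf{J}_{E,\textbf{n}_E}$ (supported on interior edges of $\Omega_S$ only) separate from the balance-of-normal-forces and Beavers--Joseph--Saffman contributions on $\Sigma$, making sure the data oscillation enters solely through the volume term so that $\zeta_S$ and not $\zeta$ appears, and invoking the trace inequality on element boundaries together with the bounded overlap of patches so that all constants stay independent of $h$.
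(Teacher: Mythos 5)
Your proposal is correct and follows exactly the route the paper takes: the paper offers no written proof of this lemma beyond the preceding remark that it is ``a direct consequence of the Cauchy--Schwarz inequality and the trace inequality,'' and your element-by-element and edge-by-edge application of Cauchy--Schwarz together with trace estimates --- including the splitting $\textbf{f}_S=\textbf{f}_{h,S}+(\textbf{f}_S-\textbf{f}_{h,S})$ that produces the additive $\zeta_S$, and the observation that $\textbf{J}_{E,\textbf{n}_E}$ vanishes on boundary edges --- is precisely the intended argument. The only caveat, which is a feature of the lemma as stated rather than of your write-up, is that the elementwise trace inequality used for the interior jump terms carries a factor $h_E^{-1/2}$, so the hidden constant is $h$-uniform only in the unweighted, mesh-dependent sense in which the authors choose to measure the Stokes-side residuals.
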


Next, we derive the upper error bound for $R_2$ and $R_6$. We have the following lemma:

\begin{lem}\label{Lemma2}
 There holds:
 \begin{eqnarray}\nonumber
  \|R_2\|_{\textbf{H}(\dive,\Omega_D)'}\lesssim \left\{\displaystyle\sum_{T\in\cT_h^D} h_T^2\Big(
  \|\textbf{f}_{h,D}-\nabla p_{h,D}-\textbf{K}^{-1}\textbf{u}_{h,D}\|_{0,T}^2+
  \|\curl (\textbf{f}_{h,D}-\textbf{K}^{-1}\textbf{u}_{h,D})\|_{0,T}^2\Big)\right.\\\label{R2}
  \left. +\displaystyle\sum_{E\in\cE(T)\cap\cE_h(\Omega_D)}h_E
  \|[ (\textbf{f}_{h,D}-\textbf{K}^{-1}\textbf{u}_{h,D}-\nabla p_{h,D})\cdot\tau_E]_E\|_{0,E}^2\right.\\\nonumber
  \left. +\displaystyle\sum_{E\in\cE(T)\cap\cE_h(\partial\Omega_D)}h_E
  \|(\textbf{f}_{h,D}-\textbf{K}^{-1}\textbf{u}_{h,D}-\nabla p_{h,D})\cdot\tau_E\|_{0,E}^2
  \right\}^{1/2}
  +
  \zeta_D,
 \end{eqnarray}
 and 
 \begin{eqnarray}\label{R6}
  \|R_6\|_{H^{-1/2}(\Sigma)}\lesssim \left\{\displaystyle\sum_{E\in\cE_h(\bar{\Sigma})}h_E\|p_{h,D}-
  \lambda_h\|_{0,E}^2\right\}^{1/2}.
 \end{eqnarray}

\end{lem}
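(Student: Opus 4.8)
The proof follows the Helmholtz-decomposition strategy used for the Darcy contribution in \cite{CGOS:2015} and \cite{SGR:2016}. To bound $\|R_2\|_{\textbf{H}(\dive,\Omega_D)'}$ it suffices to estimate $R_2(\textbf{v}_D)$ for an arbitrary $\textbf{v}_D\in\textbf{H}(\dive;\Omega_D)$ with $\|\textbf{v}_D\|_{\dive,\Omega_D}\le 1$. By an elementwise integration by parts of the pressure term one first rewrites $R_2(\textbf{v}_D)=(\textbf{f}_{h,D}-\textbf{K}^{-1}\textbf{u}_{h,D},\textbf{v}_D)_D+\sum_{E\in\cE_h(\Omega_D)}\langle[p_{h,D}]_E,\textbf{v}_D\cdot\textbf{n}_E\rangle_E+(\textbf{f}_D-\textbf{f}_{h,D},\textbf{v}_D)_D$, where the $\Sigma$-boundary contribution cancels the $\lambda_h$-term once the $R_6$-part has been split off and the $\Gamma_D$-contribution vanishes because $\textbf{v}_D\cdot\textbf{n}_D=0$ there; the last term will be absorbed into $\zeta_D$ below, using that $\textbf{f}_{h,D}$ is the local $\textbf{L}^2$-projection onto $[\mathbb{P}^1(T)]^2$.

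Next I would introduce a stable Helmholtz decomposition $\textbf{v}_D=\nabla\varphi+\curl\beta$, with $\varphi\in H^1(\Omega_D)$ the solution of an auxiliary mixed boundary value problem for the Laplacian (Neumann datum $\textbf{v}_D\cdot\textbf{n}_D$ on $\Gamma_D$, homogeneous Dirichlet on $\Sigma$), so that $\|\varphi\|_{1,\Omega_D}\lesssim\|\dive\textbf{v}_D\|_{0,\Omega_D}$, and $\beta\in H^1(\Omega_D)$ (unique up to an additive constant) with $\|\nabla\beta\|_{0,\Omega_D}\lesssim\|\textbf{v}_D\|_{\dive,\Omega_D}$; note $\dive\textbf{v}_D=\Delta\varphi$ since $\dive\curl\beta=0$. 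Then $R_2(\textbf{v}_D)=R_2(\nabla\varphi)+R_2(\curl\beta)$ is treated term by term. In the irrotational part the pressure contribution collapses, after integration by parts and use of the boundary conditions on $\varphi$, to an interior residual; inserting the Cl\'ement interpolant $I_h^D\varphi$ (the Galerkin orthogonality making $R_2$ insensitive to an $\textbf{H}_{h,\Gamma_D}(\Omega_D)$-consistent correction of the test function) and invoking Lemma \ref{clement} yields the weighted volume term $h_T^2\|\textbf{f}_{h,D}-\nabla p_{h,D}-\textbf{K}^{-1}\textbf{u}_{h,D}\|_{0,T}^2$. In the solenoidal part I would subtract the vector Cl\'ement interpolant $\textbf{I}_h^D\beta$ and integrate by parts on each element: this turns $\dive$ into $\curl$ and the normal interelement terms into tangential ones, producing $h_T^2\|\curl(\textbf{f}_{h,D}-\textbf{K}^{-1}\textbf{u}_{h,D})\|_{0,T}^2$ in the interiors, $h_E\|[(\textbf{f}_{h,D}-\textbf{K}^{-1}\textbf{u}_{h,D}-\nabla p_{h,D})\cdot\tau_E]_E\|_{0,E}^2$ on the interior edges and the analogous boundary-edge terms on $\partial\Omega_D$, the weights $h_T$ and $h_E^{1/2}$ being exactly those supplied by Lemma \ref{clement}. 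A Cauchy--Schwarz inequality against $\|\nabla\beta\|_{0,\Omega_D}\lesssim 1$ collects everything into the right-hand side of (\ref{R2}); the data-oscillation terms $(\textbf{f}_D-\textbf{f}_{h,D},\nabla\varphi)_D$ and $(\textbf{f}_D-\textbf{f}_{h,D},\curl\beta)_D$ are handled the same way (an integration by parts of the curl part brings in $\curl(\textbf{f}_D-\textbf{f}_{h,D})$) and produce $\zeta_D$.

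For $R_6$ the argument is shorter. Writing $R_6(\textbf{v})=\langle p_{h,D}-\lambda_h,\textbf{v}_S\cdot\textbf{n}_S+\textbf{v}_D\cdot\textbf{n}_D\rangle_\Sigma$ and treating the contributions of $\textbf{v}_S$ and of $\textbf{v}_D$ (the latter first for smooth fields, then by density) separately, I would, on each edge $E\in\cE_h(\bar{\Sigma})$, subtract from the corresponding normal trace a discrete function for which $p_{h,D}-\lambda_h$ is orthogonal — the orthogonality coming from the discrete coupling equation in (\ref{FVh}) — and then apply Cauchy--Schwarz edge by edge. The $O(h_E^{1/2})$ local approximation estimate of Lemma \ref{clement} then gives $|R_6(\textbf{v})|\lesssim\{\sum_{E\in\cE_h(\bar{\Sigma})}h_E\|p_{h,D}-\lambda_h\|_{0,E}^2\}^{1/2}\|\textbf{v}\|_{\textbf{H}}$, which is (\ref{R6}).

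The main obstacle is the irrotational part of $R_2$: securing the stable Helmholtz decomposition on the Lipschitz polygon $\Omega_D$ with mixed boundary conditions, and justifying the interelement and boundary pairings with $\nabla\varphi$ when $\varphi$ is only known to lie in $H^1(\Omega_D)$ rather than $H^2$. The precise boundary conditions chosen for $\varphi$ serve to ensure that $p_{h,D}$ — which, being piecewise constant, is not in $H^{1/2}(\partial\Omega_D)$ — never has to be paired against a fractional-order trace, and the remaining low-regularity difficulties are absorbed by a density argument. Everything else is Cauchy--Schwarz, trace inequalities, and Lemma \ref{clement}.
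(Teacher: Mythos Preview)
Your overall strategy---Helmholtz decomposition, Cl\'ement interpolation, elementwise integration by parts---is the same as the paper's, but you have picked a harder variant of the decomposition, and that choice is precisely what generates the ``main obstacle'' you flag at the end and do not close.

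The paper does \emph{not} write $\textbf{v}_D=\nabla\varphi+\curl\beta$. It invokes the decomposition of \cite[Lemma~3.3]{47}: every $\textbf{v}_D\in\textbf{H}(\dive;\Omega_D)$ splits as $\textbf{v}_D=\textbf{w}_D+\curl\beta_D$ with $\textbf{w}_D\in\textbf{H}^1(\Omega_D)$ (a genuine $\textbf{H}^1$ vector field, \emph{not} a gradient) and $\beta_D\in H^1(\Omega_D)$, both stably controlled by $\|\textbf{v}_D\|_{\dive,\Omega_D}$. One then sets $\textbf{v}_{h,D}:=\textbf{I}_h^D\textbf{w}_D+\curl(I_h^D\beta_D)$, uses $R_2(\textbf{v}_{h,D})=0$ from (\ref{Residu}), and estimates $R_2(\textbf{w}_D-\textbf{I}_h^D\textbf{w}_D)$ and $R_2\bigl(\curl(\beta_D-I_h^D\beta_D)\bigr)$ directly via Green's formula and Lemma~\ref{clement}. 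Because $\textbf{w}_D$ already lies in $\textbf{H}^1$, the Cl\'ement bound $\|\textbf{w}_D-\textbf{I}_h^D\textbf{w}_D\|_{0,T}\lesssim h_T|\textbf{w}_D|_{1,\Delta_D(T)}$ produces the $h_T$ weight on the volume residual with no auxiliary mixed boundary-value problem, no $H^2$ regularity on a Lipschitz polygon, and no fractional-trace pairing for the piecewise-constant $p_{h,D}$. Your choice $\nabla\varphi$ would instead force you to bound $\|\nabla(\varphi-I_h^D\varphi)\|_{0,T}$ by $h_T$ times something, i.e.\ to control $|\varphi|_{2,\Delta_D(T)}$, which is exactly the difficulty you could not resolve.

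Two smaller points. Your opening integration-by-parts rewriting of $R_2$, which produces $[p_{h,D}]_E$ jumps and a claimed cancellation with a $\lambda_h$-term ``once the $R_6$-part has been split off'', is unnecessary and somewhat muddled: $R_2$ as defined contains only the elementwise $\nabla p_{h,D}$ (zero for piecewise constants) and no interface term, and the paper performs no such preprocessing. For (\ref{R6}) the paper gives no argument of its own; it simply cites \cite[Lemma~3.4]{SGR:2016}.
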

\begin{proof}
 The estimate (\ref{R6}) follows directly from \cite[Page 953, Lemma 3.4]{SGR:2016}. Our next goal is to derive 
 the upper bound for $R_2$, for which, given $\textbf{v}_D\in\textbf{H}(\dive;\Omega_D)$, we consider its Helmholtz 
 decomposition provided in \cite[Page 1882, Lemma 3.3]{47}. More precisely, there is $C_D> 0$ such that each 
 $\textbf{v}_D\in\textbf{H}(\dive;\Omega_D)$ can be
decomposed as $\textbf{v}_D=\textbf{w}_D+\curl \beta_D$, where $\textbf{w}_D\in \textbf{H}^1(\Omega_D)$ and $\beta_D\in 
H^1(\Omega_D)$ with $\int_{\Omega_D}\beta_D=0$, and 
$\|\textbf{w}\|_{\textbf{H}^1(\Omega_D)}+\|\beta\|_{H^1(\Omega_D)}\leq C_D\|\textbf{v}_D\|_{\textbf{H}(\dive;\Omega_D)}$.\\
Then, defining $\textbf{v}_{h,D}:=I_h^D(\textbf{w}_D)+\curl (I_h^D\beta_D)\in \textbf{H}_h(\dive,\Omega_D)$, which 
can be seen as a discrete Helmholtz decomposition of $\textbf{v}_{h,D}$, and applying from (\ref{Residu}) that
$R_2(\textbf{v}_{h,D})=0$, we can write 
\begin{eqnarray}
 R_2(\textbf{v}_{D})=R_2(\textbf{v}_{D}-\textbf{v}_{h,D})=
 R_2(\textbf{w}_D-\textbf{w}_{h,D})+R_2(\curl(\beta_D-\beta_{h,D})),
\end{eqnarray}
with $\textbf{w}_{h,D}=I_h^D(\textbf{w}_D)$ and $\beta_{h,D}=\curl (I_h^D\beta_D)$. 
Note that $\curl (\nabla p_{h,D})=0$. Thus,  we have now by standard Green's formula in two spatial dimensions 
on each $T$, the inequality:
\begin{eqnarray*}
 R_2(\textbf{v}_{D})\lesssim\displaystyle\sum_{T\in\cT_h^D}\left\{
 (\textbf{f}_{D}-\nabla p_{h,D}-\textbf{K}^{-1}\textbf{u}_{h,D},
 \textbf{w}_D-\textbf{w}_{h,D})_T\right.\\
 -\left.\left(\curl(\textbf{f}_D-\textbf{K}^{-1}\textbf{u}_{h,D}), \beta_D-\beta_{h,D}\right)_T\right.\\
 +\left.
 ((\textbf{f}_D-\nabla p_{h,D}-\textbf{K}^{-1}\textbf{u}_{h,D})\cdot\tau_D,\beta_D-\beta_{h,D})_{\partial T}
 \right\}.
 \end{eqnarray*}
 
We introduce the approximation $\textbf{f}_{h,D}$ of $\textbf{f}_D$, and then
the estimate (\ref{R2}) follows by applying Cauchy-Schwarz inequality and
the Cl\'ement operator estimations of Lemma \ref{clement}.
\end{proof}
 We have now the main result of this subsection:
 \begin{thm}\label{uperbound}
 Assume that $\textbf{f}_S\in\textbf{L}^2(\Omega_S)$ and $\textbf{f}_D\in\textbf{L}^2(\Omega_D)$ satisfy the 
conditions of Theorems \ref{ue} and \ref{uh}. 
Let $(\textbf{u},(p,\lambda))\in \textbf{H}\times \textbf{Q}$ be the exact solution and
 $(\textbf{u}_h,(p_h,\lambda_h))\in\textbf{H}_h\times \textbf{Q}_h$ be the finite element solution of coupled 
 problem Navier-Stokes/Darcy. Then, 
 the a posteriori error estimator $\Theta$ satisfies (\ref{Upper}).
 \end{thm}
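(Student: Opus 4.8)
The plan is to obtain (\ref{Upper}) as an essentially immediate consequence of the preliminary estimate (\ref{PUP}), combined with the residual decomposition $R = R_1 + R_2 + R_3 + R_4 + R_5 + R_6$ and the bounds collected in Lemmas \ref{Lemma1} and \ref{Lemma2}. First I would recall that, since $\textbf{e}_{\textbf{u}} = \textbf{u}-\textbf{u}_h$, $e_p = p-p_h$ and $e_\lambda = \lambda-\lambda_h$ are exactly the components of $\textbf{U}-\textbf{U}_h$, estimate (\ref{PUP}) reads
\[
 \|(\textbf{e}_{\textbf{u}},(e_p,e_{\lambda}))\|_{\textbf{H}\times\textbf{Q}} \;\lesssim\; \|R\|_{(\textbf{H}\times\textbf{Q})'},
\]
so the whole task is to bound $\|R\|_{(\textbf{H}\times\textbf{Q})'}$ from above by $\Theta+\zeta$. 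Nothing further is needed from the nonlinear operators $\textbf{P}$, $\textbf{P}_h$: the global inf-sup property of $\mathcal{D}\textbf{P}(\textbf{W})$ inherited from \cite{CGOS:2015} is precisely what makes (\ref{PUP}) available.

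The second step is the triangle inequality (\ref{R}): the splitting of $R$ gives
\[
 \|R\|_{(\textbf{H}\times\textbf{Q})'} \;\leq\; \|R_1\|_{\textbf{H}_{\Gamma_S}(\Omega_S)'} + \|R_2\|_{\textbf{H}(\dive;\Omega_D)'} + \|R_3\|_{L^2(\Omega_S)'} + \|R_4\|_{L^2(\Omega_D)'} + \|R_5\|_{H^{-1/2}(\Sigma)} + \|R_6\|_{H^{-1/2}(\Sigma)}.
\]
Into this I would substitute the bounds of Lemma \ref{Lemma1} for $\|R_1\|$, $\|R_3\|$, $\|R_4\|$ and $\|R_5\|$, and those of Lemma \ref{Lemma2} for $\|R_2\|$ and $\|R_6\|$.

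The last step is pure bookkeeping: one checks that the summands on the right-hand sides of those lemma estimates are exactly the terms assembled in (\ref{theta1}) and (\ref{theta2}). The $R_1$-bound contributes $\|\textbf{r}_{S,T}\|_{0,T}^2$, the jump terms $\|\textbf{J}_{E,\textbf{n}_E}\|_{0,E}^2$, and the two $\Sigma$-contributions encoding the balance of normal forces and the Beavers--Joseph--Saffman law, plus $\zeta_S$; the $R_3$-bound contributes $\|\dive\textbf{u}_{h,S}\|_{0,T}^2$ and the $R_5$-bound the mass-conservation term $\|\textbf{u}_{h,S}\cdot\textbf{n}_S+\textbf{u}_{h,D}\cdot\textbf{n}_D\|_{0,E}^2$; together these make up $\sum_{T\in\cT_h^S}\Theta_{S,T}^2$. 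Symmetrically, the $R_2$-bound yields the elementwise residual and $\curl$ terms weighted by $h_T^2$, the tangential-jump edge terms weighted by $h_E$, and $\zeta_D$; the $R_4$-bound yields $\|\dive\textbf{u}_{h,D}\|_{0,T}^2$; and the $R_6$-bound yields $h_E\|p_{h,D}-\lambda_h\|_{0,E}^2$; together these make up $\sum_{T\in\cT_h^D}\Theta_{D,T}^2$. Bounding each dual norm by a sum and recombining the squared contributions via $a+b\le\sqrt{2}\sqrt{a^2+b^2}$ yields $\|R\|_{(\textbf{H}\times\textbf{Q})'}\lesssim\Theta+\zeta$, and feeding this back into (\ref{PUP}) gives (\ref{Upper}).

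The step I expect to be the main (though, honestly, modest) obstacle is this final accounting: one must verify that each interface integral appearing across the different $R_i$ is absorbed into the indicators (\ref{theta1})--(\ref{theta2}) once and only once — in particular that the $\Sigma$-term of $R_1$ lands in $\Theta_{S,T}^2$ and the $\Sigma$-term of $R_6$ in $\Theta_{D,T}^2$ with no overlap — and that the generic constants hidden in $\lesssim$ depend neither on $h$ nor on $\cT_h$. The substantive analytic work (the Helmholtz decomposition in $\textbf{H}(\dive;\Omega_D)$, the Cl\'ement interpolation estimates of Lemma \ref{clement}, the trace inequalities, and the inf-sup reduction (\ref{PUP})) having already been carried out, Theorem \ref{uperbound} then follows as a corollary of (\ref{PUP}), (\ref{R}) and Lemmas \ref{Lemma1}--\ref{Lemma2}.
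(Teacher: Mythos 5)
Your proposal is correct and follows exactly the paper's route: the paper's own proof of Theorem \ref{uperbound} is precisely the combination of the preliminary bound (\ref{PUP}), the decomposition (\ref{R}), and the estimates of Lemmas \ref{Lemma1} and \ref{Lemma2}, with the final bookkeeping you describe left implicit. Nothing is missing.
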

 
\begin{proof}
 Follows directly from Lemmas \ref{Lemma1} and \ref{Lemma2}.
\end{proof}
\subsection{Efficiency of the a posteriori error estimator}\label{sec:ul}
In order to derive the local lower bounds, we proceed similarly as in \cite{Ca:97} and 
\cite{CD:98} (see also \cite{15}), by applying
inverse inequalities, and the localization technique based on simplex-bubble and face-bubble functions. To this end, we 
  recall some notation and introduce further preliminary results. Given $T\in \mathcal{T}_h$, and 
$E\in \cE(T)$,
we let $b_T$ and $b_E$ be the usual simplexe-bubble and face-bubble 
functions respectively (see (1.5) and (1.6) in \cite{verfurth:96b}). In particular, $b_T$ satisfies 
$b_T\in \mathbb{P}^3(T)$, $supp(b_T)\subseteq T$, $b_T=0 \mbox{ sur } \partial T$, and $0\leq b_T\leq 1\mbox{ on } T $.
Similarly, $b_E\in \mathbb{P}^2(T)$, $supp(b_E)\subseteq 
\omega_E:=\left\{T'\in \mathcal{T}_h:  E\in\cE (T')\right\}$, 
$b_E=0\mbox{  on  } \partial T\smallsetminus E$ and $0\leq b_E\leq 1\mbox{ in } \omega_E$.
We also recall from \cite{verfurth:94a} that, given $k\in\mathbb{N}$, there exists an extension operator
 $L: C(E)\longrightarrow C(T)$ that satisfies $L(p)\in \mathbb{P}^k(T)$ and $L(p)_{|E}=p, \forall p\in \mathbb{P}^k(E)$.
A corresponding vectorial version of $L$, that is, the componentwise application of $L$, is denoted by 
$\textbf{L}$. Additional properties of $b_T$, $b_E$ and $L$ are collected in the following lemma (see \cite{verfurth:94a})

\begin{lem}
 Given $k\in \mathbb{N}^*$, there exist positive constants depending only on $k$ and shape-regularity of the triangulations 
(minimum angle condition), such that for each simplexe $T$ and $E\in \cE(T)$ there hold
\begin{eqnarray}\label{cl1}
\parallel q \parallel_{0,T}&\lesssim&\parallel qb_T^{1/2}\parallel_{0,T}\lesssim
 \parallel q\parallel_{0,T}, \forall q\in \mathbb{P}^k(T)\\\label{cl2}
\|\nabla(q b_T)\|_{0,T}&\lesssim&  h_T^{-1}\parallel q \parallel_{0,T}, \forall q\in \mathbb{P}^k(T)\\\label{cl3}
\parallel p\parallel_{0,E}&\lesssim&\parallel b_E^{1/2}p\parallel_{0,E}\lesssim \parallel p\parallel_{0,E},
\forall p\in \mathbb{P}^k(E)\\\label{cl4}
\parallel L(p)\parallel_{0,T} +h_E\|\nabla(L(p))\|_{0,T}&\lesssim& h_E^{1/2}\parallel p\parallel_{0,E}
\forall p\in \mathbb{P}^k(E)
\end{eqnarray}
\end{lem}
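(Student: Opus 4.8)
The plan is to establish the four scaled-norm equivalences \eqref{cl1}–\eqref{cl4} as standard consequences of the finite-dimensionality of polynomial spaces on a reference element, combined with the usual affine scaling argument; these are classical results of Verf\"urth that I would reproduce here only in outline. First I would fix the reference simplex $\hat T$ together with a reference edge $\hat E$, and on $\hat T$ define the bubble $\hat b_{\hat T}$ as the (normalized) product of the three barycentric coordinates and $\hat b_{\hat E}$ as the product of the two barycentric coordinates associated with the endpoints of $\hat E$; these satisfy $\hat b_{\hat T}\in\mathbb P^3(\hat T)$, $\hat b_{\hat E}\in\mathbb P^2(\hat T)$, vanish on the appropriate parts of $\partial\hat T$, and lie in $[0,1]$. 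The extension operator $\hat L:\mathbb P^k(\hat E)\to\mathbb P^k(\hat T)$ is built by extending a polynomial on $\hat E$ constantly along the direction transversal to $\hat E$ (in barycentric coordinates), which manifestly gives a polynomial of the same degree whose restriction to $\hat E$ is the original datum.

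Next I would prove \eqref{cl1} and \eqref{cl3} on the reference element. The map $q\mapsto \|q\,\hat b_{\hat T}^{1/2}\|_{0,\hat T}$ is a norm on the finite-dimensional space $\mathbb P^k(\hat T)$ — positivity holds because $\hat b_{\hat T}>0$ on the interior of $\hat T$ and a polynomial vanishing on a set of positive measure is zero — hence it is equivalent to $\|q\|_{0,\hat T}$; the constants depend only on $k$. The same reasoning applied to $\hat b_{\hat E}$ on $\hat E$ (noting $\hat b_{\hat E}>0$ on the relative interior of $\hat E$) gives the reference version of \eqref{cl3}. For \eqref{cl2} I would use that $q\mapsto\|\nabla(q\,\hat b_{\hat T})\|_{0,\hat T}$ and $q\mapsto\|q\|_{0,\hat T}$ are both norms on $\mathbb P^k(\hat T)$ (the first because $q\,\hat b_{\hat T}\equiv 0$ forces $q\equiv 0$), so again they are equivalent on the reference element; the factor $h_T^{-1}$ then appears purely from scaling. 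For \eqref{cl4} I would similarly bound $\|\hat L(\hat p)\|_{0,\hat T}+\|\nabla\hat L(\hat p)\|_{0,\hat T}$ by $\|\hat p\|_{0,\hat E}$ using equivalence of norms on the finite-dimensional space $\mathbb P^k(\hat E)$ together with the boundedness of the linear map $\hat L$.

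The final step is the transfer back to an arbitrary $T\in\cT_h$ via the affine diffeomorphism $F_T:\hat T\to T$, setting $b_T:=\hat b_{\hat T}\circ F_T^{-1}$, $b_E:=\hat b_{\hat E}\circ F_T^{-1}$, and $L(p):=\widehat{L(\hat p)}\circ F_T^{-1}$ with $\hat p:=p\circ F_T$. The standard scaling estimates give $\|v\circ F_T^{-1}\|_{0,T}\sim |\det DF_T|^{1/2}\|v\|_{0,\hat T}$, an analogous relation for edge $L^2$-norms with $|E|^{1/2}$ in place of $|\hat E|^{1/2}|\det DF_T|^{1/2}$, and $\|\nabla(v\circ F_T^{-1})\|_{0,T}\lesssim \|DF_T^{-1}\|\,|\det DF_T|^{1/2}\|\hat\nabla v\|_{0,\hat T}$. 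Under the minimum-angle (shape-regularity) hypothesis one has $\|DF_T\|\sim h_T$, $\|DF_T^{-1}\|\sim h_T^{-1}$, $|\det DF_T|\sim h_T^2$, and $|E|\sim h_E\sim h_T$, so all the determinant and Jacobian factors combine to produce exactly the powers of $h_T$ (equivalently $h_E$) displayed in \eqref{cl1}–\eqref{cl4}, with constants depending only on $k$ and the shape-regularity constant. The main — and really the only — subtlety is bookkeeping the Jacobian powers correctly when passing between volume norms, edge norms, and gradient norms; once the reference-element norm equivalences are in place, no genuinely new idea is needed, and indeed the whole statement is quoted from \cite{verfurth:94a}, so I would present the above as a sketch and refer there for the remaining routine details.
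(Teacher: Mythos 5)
Your proposal is correct and follows exactly the standard argument: the paper itself offers no proof of this lemma, merely quoting it from Verf\"urth \cite{verfurth:94a}, and the reference-element norm-equivalence plus affine-scaling argument you sketch is precisely the classical proof given there. The only cosmetic slip is the phrasing of the edge-norm scaling relation (the edge $L^2$-norm transforms with the ratio $(|E|/|\hat E|)^{1/2}$, not with $|\det DF_T|^{1/2}$), but you use the correct relation in the final bookkeeping, so the argument stands.
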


To prove local efficiency for $\omega\subset \Omega:=\Omega_S\cup\Sigma\cup\Omega_D$, let us denote by 
\begin{eqnarray*}
 \|(\textbf{v},(q,\xi))\|_{h,w}^2&:=&
 \displaystyle\sum_{E\in\cE_h(\bar{\omega}\cap \bar{\Omega}_S)}h_E^{-1}
 \left(\|\textbf{v}\|_{1,\omega_E}^2+\|q_S\|_{\omega_E}^2\right)+\|\textbf{v}_D\|_{\textbf{H}(\dive;
 \omega\cap\Omega_D)}^2\\
 &+&\|q_D\|_{L^2(\omega\cap\Omega_D)}^2+\|\xi\|_{1/2,\Sigma\cap\bar{\omega}}^2,
\end{eqnarray*}
where
\begin{eqnarray}
 \omega_E&:=&\cup\left\{T'\in\cT_h^S:\mbox{   } E\in\cE(T)\right\}.
\end{eqnarray}
Recall further the notation for the velocity error $\textbf{e}_{\textbf{u}} = \textbf{u}-\textbf{u}_h$, the
pressure error $e_p=p-p_h$ and the Lagrange multiplier $e_{\lambda}=\lambda-\lambda_h$.
The main result of this subsection can be stated as follows:
\begin{thm} \label{Lowerbound}
Assume that $\textbf{f}_S\in\textbf{L}^2(\Omega_S)$ and $\textbf{f}_D\in\textbf{L}^2(\Omega_D)$ satisfy the 
conditions of Theorem \ref{ue} and \ref{uh}. 
Let $(\textbf{u},(p,\lambda))\in \textbf{H}\times \textbf{Q}$ be the exact solution and
 $(\textbf{u}_h,(p_h,\lambda_h))\in\textbf{H}_h\times \textbf{Q}_h$ be the finite element solution of coupled 
 problem Navier-Stokes/Darcy. Then, the local error estimator $\Theta_T$ satisfies:
 \begin{eqnarray}\label{leb}
  \Theta_T&\lesssim&\|(\textbf{e}_{\textbf{u}},(e_p,e_\lambda))\|_{h,\tilde \omega_T}
  +\displaystyle\sum_{T'\subset \tilde \omega_T}\zeta_{T'}, \forall T\in \cT_h,
 \end{eqnarray}
 where $\tilde \omega_T$ is a finite union of neighbording elements of $T$.

\end{thm}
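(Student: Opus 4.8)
The plan is to establish the local lower bound \eqref{leb} term by term, by identifying each summand of $\Theta_{S,T}^2$ and $\Theta_{D,T}^2$ with a piece of the true error, using the standard bubble-function localization machinery recalled in the preceding lemma. The general recipe, following \cite{verfurth:96b,Ca:97,CD:98}, is: multiply the residual (element residual, edge/jump residual, or interface residual) by an appropriate bubble-weighted test function supported in a small patch $\tilde\omega_T$, integrate by parts to bring in the exact equations \eqref{I1}--\eqref{cd3}, and then use the inverse-type inequalities \eqref{cl1}--\eqref{cl4} together with the Cauchy--Schwarz inequality to absorb the bubble weights and recover powers of $h_T$ or $h_E$. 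This transforms a lower bound on the indicator into an upper bound by the local norm of the error plus the oscillation term $\zeta_{T'}$ coming from replacing $\textbf{f}_*$ by $\textbf{f}_{h,*}$.

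First I would treat the Navier--Stokes element residual: take $\textbf{v} = b_T\,\textbf{r}_{S,T}$, extended by zero outside $T$, as a test function in the weak formulation \eqref{FV}; since $\textbf{e}(\textbf{u}_S)$, $p_S$ solve \eqref{I1}, integrating by parts on $T$ gives $\|\textbf{r}_{S,T}b_T^{1/2}\|_{0,T}^2$ controlled by terms involving $\textbf{e}_{\textbf{u}_S}$, $e_p$ and the data oscillation, and \eqref{cl1}--\eqref{cl2} then yield $\|\textbf{r}_{S,T}\|_{0,T}\lesssim \|\textbf{e}_{\textbf{u}_S}\|_{1,T}+\|e_p\|_{0,T}+\zeta_T$ (with a $\rho$-dependent contribution from the convective and $\tfrac{\rho}{2}\dive$ terms, handled by boundedness of the nonlinearity on the discrete solution). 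For the jump term $\textbf{J}_{E,\textbf{n}_E}$ I would use the face-bubble $b_E$ and the extension operator $\textbf{L}$: test with $\textbf{L}(\textbf{J}_{E,\textbf{n}_E}b_E)$ on $\omega_E$, integrate by parts on both elements sharing $E$, insert the already-controlled element residuals, and use \eqref{cl3}--\eqref{cl4}. The divergence terms $\|\dive\textbf{u}_{h,S}\|_{0,T}$ and $\|\dive\textbf{u}_{h,D}\|_{0,T}$ are immediate since $\dive\textbf{u}_*=0$ exactly, so these equal $\|\dive\textbf{e}_{\textbf{u}_*}\|_{0,T}\le\|\textbf{e}_{\textbf{u}_*}\|_{\textbf{H}(\dive)}$.

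Next I would handle the Darcy indicator $\Theta_{D,T}^2$. Here the natural device, as in \cite{47} and \cite{SGR:2016}, is to exploit that $\textbf{f}_D-\textbf{K}^{-1}\textbf{u}_D = \nabla p_D$ is a gradient, hence curl-free; thus $\curl(\textbf{f}_{h,D}-\textbf{K}^{-1}\textbf{u}_{h,D})$ measures $\curl$ of the error $\textbf{K}^{-1}\textbf{e}_{\textbf{u}_D}$ plus the oscillation of $\curl\textbf{f}_D$, and similarly the tangential-jump terms $[(\textbf{f}_{h,D}-\textbf{K}^{-1}\textbf{u}_{h,D}-\nabla p_{h,D})\cdot\tau_E]_E$ are localized with $b_E$ and bounded by the same error in $H(\dive;\omega_E)$-type norm (after using inverse estimates to trade $h_E^{1/2}$). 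The first summand $h_T\|\textbf{f}_{h,D}-\nabla p_{h,D}-\textbf{K}^{-1}\textbf{u}_{h,D}\|_{0,T}$ is localized with the element bubble $b_T$ against the exact Darcy equation. The interface term $h_E\|p_{h,D}-\lambda_h\|_{0,E}^2$ uses $\lambda=p_D$ and the definition $e_\lambda=\lambda-\lambda_h$, giving $\|p_{h,D}-\lambda_h\|_{0,E}\le\|e_p\|_{0,E\cap\Omega_D}+\|e_\lambda\|_{0,E}$, absorbed into $\|\cdot\|_{h,\tilde\omega_T}$ via a trace/inverse inequality on $\Sigma$.

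Finally, the three interface terms appearing in $\Theta_{S,T}^2$ — the normal-force balance residual $-p_{h,S}+p_{h,D}-2\mu\textbf{n}_S\cdot\textbf{e}(\textbf{u}_{h,S})\cdot\textbf{n}_S$, the Beavers--Joseph--Saffman residual, and the mass-conservation residual $\textbf{u}_{h,S}\cdot\textbf{n}_S+\textbf{u}_{h,D}\cdot\textbf{n}_D$ — I expect to be the main obstacle, because the standard interior bubble technique does not directly apply on $\Sigma$: one must use face bubbles on the boundary edges together with extensions into $\Omega_S$ (resp. $\Omega_D$), and carefully exploit the transmission conditions \eqref{cd1}--\eqref{cd3} satisfied by the exact solution to rewrite each residual as a jump of the error across $\Sigma$. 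The subtlety is that these terms appear in $\Theta$ without powers of $h_E$, so they are measured in the $h_E^{-1}$-weighted norm built into $\|(\textbf{v},(q,\xi))\|_{h,\omega}^2$; matching the weights correctly — and controlling the $\textbf{u}_{h,D}\cdot\textbf{n}_D$ contribution, which lives only in $\textbf{H}(\dive;\Omega_D)$ and has no $H^1$-regularity — is where the argument must be done with care, borrowing the interface localization estimates of \cite{SGR:2016} and \cite{AHN:15}. Assembling all the pieces over the edges and elements in the fixed finite patch $\tilde\omega_T$ gives \eqref{leb}.
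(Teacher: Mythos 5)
Your plan follows essentially the same route as the paper's proof: element-bubble localization of $\textbf{r}_{S,T}$ and $\textbf{r}_{D,T}$ against the exact equations, the curl-free structure of $\textbf{f}_D-\textbf{K}^{-1}\textbf{u}_D=\nabla p_D$ for the $\curl$ and tangential-jump terms, face-bubble test functions $r_E b_E\textbf{N}$ (with $\textbf{N}=\tau$ or $\textbf{n}_S$) extended into $T_S\cup T_D$ for the interface residuals, the triangle inequality with $\lambda=p_D$ for the $p_{h,D}-\lambda_h$ term, and a trace argument for the mass-conservation residual, with the nonlinear convective contribution absorbed via the a priori boundedness of $\|\textbf{u}_S\|_{1,\Omega_S}$ and $\|\textbf{u}_{h,S}\|_{1,\Omega_S}$. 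The approach and the decomposition into the same eleven local bounds match the paper's argument.
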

\begin{proof}
To establish the lower error bound (\ref{leb}), we will make extensive use of the original system of 
equations given by (\ref{I1}) to (\ref{cd3}), which is recovered from the mixed formulation (\ref{FV}) by choosing suitable 
test functions and integrating by parts backwardly the corresponding equations. Thereby, 
 we bound each term of the residual separately.  
 \begin{enumerate}
  \item \textbf{Element residual in $\Omega_S$.} Set 
  $\textbf{w}_T:=\textbf{r}_{S,T}b_T\in [H_0^1(T)]^2$ and consider 
  \begin{eqnarray*}
  (\textbf{r}_{S,T},\textbf{w}_T)_T=\\
  \int_T\left(\textbf{f}_S+2\mu\dive(\textbf{e}(\textbf{u}_{h,S}))-
 \nabla p_{h,S}-\rho\left(\textbf{u}_{h,S}\cdot\nabla\right)\textbf{u}_{h,S}-\frac{\rho}{2}
 \textbf{u}_{h,S}\dive\textbf{u}_{h,S}\right)\cdot\textbf{w}_T.
  \end{eqnarray*}
  Introduce $\textbf{f}_S$ and 
use the formulation (\ref{FV}) to get, 
\begin{eqnarray*}
 \int_T\textbf{r}_{S,T}\cdot\textbf{w}_T&=&\int_T(\textbf{f}_{S}-\textbf{f}_{S,h})\cdot\textbf{w}_T\\
 &+&\int_T(2\mu \textbf{e}(\textbf{u}_{S}):\nabla\textbf{w}_T)+
 \rho\int_T[(\textbf{u}_{S}\cdot\nabla)\textbf{u}_{S}]\cdot\textbf{w}_T-p_S\dive\textbf{w}_T\\
 &+&\int_T\left[2\mu\dive(\textbf{e}(\textbf{u}_{h,S}))-
 \nabla p_{h,S}-\rho\left(\textbf{u}_{h,S}\cdot\nabla\right)\textbf{u}_{h,S}\right]\cdot\textbf{w}_T\\
 &-&\int_T\left[\frac{\rho}{2}
 \textbf{u}_{h,S}\dive\textbf{u}_{h,S}\right]\cdot\textbf{w}_T.
\end{eqnarray*}
Integrating by parts we get,
\begin{eqnarray*}
 \int_T\textbf{r}_{S,T}\cdot\textbf{w}_T
 &=&
 \int_T(\textbf{f}_S-\textbf{f}_{S,h})\cdot\textbf{w}_T+
 2\mu\int_{T}\textbf{e}(\textbf{e}_{\textbf{u}_S}):\nabla(\textbf{w}_T)-\int_Te_p\dive\textbf{w}_T\\
 &+&
 \int_T\left[\rho(\textbf{u}_{S}\cdot\nabla)\textbf{u}_{S}-\frac{\rho}{2}
 \textbf{u}_{h,S}\dive\textbf{u}_{h,S}-\rho\left(\textbf{u}_{h,S}\cdot\nabla\right)\textbf{u}_{h,S}
 \right]\cdot\textbf{w}_T
\end{eqnarray*}
Cauchy-Schwarz inequality implies that, 
\begin{eqnarray*}
 \int_T\textbf{r}_{S,T}\cdot\textbf{w}_T&\lesssim&
 \|\textbf{f}_S-\textbf{f}_{S,h}\|_{0,T}\|\textbf{w}_T\|_{0,T}+(2\mu\|\textbf{e}_{\textbf{u}_S}\|_{1,T}
 +\|e_{p_S}\|_{0,T})\|\nabla\textbf{w}_T\|_{0,T}\\
 &+&\left|\int_T\left[\rho(\textbf{u}_{S}\cdot\nabla)\textbf{u}_{S}-\frac{\rho}{2}
 \textbf{u}_{h,S}\dive\textbf{u}_{h,S}-\rho\left(\textbf{u}_{h,S}\cdot\nabla\right)\textbf{u}_{h,S}
 \right]\cdot\textbf{w}_T\right|
\end{eqnarray*}
The inverse inequalities (\ref{cl1}), (\ref{cl2}), the obvious relation 
$\parallel \textbf{w}_T\parallel_{0,T}\leqslant \parallel \textbf{r}_{s,T}\parallel_{0,T}$ and 
the fact that $\|\textbf{u}_S\|_{1,\Omega_S}$ (see \cite[Lemma 5 and Lemma 6]{MR:2016}) 
and $\|\textbf{u}_{S,h}\|_{1,\Omega_S}$ (see \cite[Lemma 12]{MR:2016}) are both bounded lead to,
\begin{eqnarray*}
 \|\textbf{r}_{S,T}\|_{0,T}&\lesssim& \|\textbf{f}_S-\textbf{f}_{S,h}\|_{0,T}+
 h_T^{-1}\|\nabla\textbf{e}_{\textbf{u}_S}\|_{0,T}+h_T^{-1}\|e_{p_S}\|_{0,T}.
\end{eqnarray*}
As $h_{T}^{-1}\leq h_E^{-1}, \forall E\in\cE(T)$, then we deduce,
\begin{eqnarray}\label{LW1}
 \|\textbf{r}_{S,T}\|_{0,T}\lesssim \|(\textbf{e}_{\textbf{u}},(e_p,e_{\lambda}))\|_{h,w_T}+\zeta_S.
\end{eqnarray}

\item \textbf{Element residual in $\Omega_D$.} Set $\textbf{w}_T:=\textbf{r}_{D,T}b_T\in [H_0^1(T)]^2$, we use (\ref{FV}) and 
Integrate by parts to obtain: 
\begin{eqnarray*}
 \int_T\textbf{r}_{D,T}\cdot\textbf{w}_T&=&\int_T(\textbf{f}_{h,D}-\textbf{K}^{-1}\textbf{u}_{h,D}-
 \nabla p_{h,D})\cdot\textbf{w}_T\\
 &=&\int_T(\textbf{f}_{h,D}-\textbf{K}^{-1}\textbf{u}_{h,D}-\nabla p_{h,D})\cdot\textbf{w}_T\\
 &+&\int_T(\textbf{K}^{-1}\textbf{u}_D-\textbf{f}_D)\cdot\textbf{w}_T-p_D\dive\textbf{w}_{T}\\
 &=&\int_T-(\textbf{f}_D-\textbf{f}_{h,D})\cdot\textbf{w}_T+\int_T (\textbf{K}^{-1}\textbf{e}_{\textbf{u}_D}
 \cdot\textbf{w}_T+e_{p_D}\dive\textbf{w}_T).
\end{eqnarray*}
As before Cauchy-Schwarz inequality and the inverse inequalities  (\ref{cl1})-(\ref{cl2})   lead to,
\beq\nonumber
 h_T\|\textbf{r}_{D,T}\|_{0,T}\lesssim h_T\parallel \textbf{f}_D-\textbf{f}_{h,D}\parallel_{0,T}+
 \parallel \textbf{K}^{-1}\textbf{e}_{\textbf{u}_D}\parallel_{0,T}+\parallel e_{p_D}\parallel_{0,T}.
\eeq
 Thereby,
 \begin{eqnarray}\label{LW2}
  h_T\|\textbf{r}_{D,T}\|_{0,T}\lesssim\|(\textbf{e}_{\textbf{u}},(e_p,e_{\lambda}))\|_{h,w_T}+\zeta_D.
 \end{eqnarray}

\item \textbf{Curl element residual in } $\Omega_D$. \\
For $T\in\cT_{h}^D$, we set $C_T=\curl(\textbf{f}_{h,D}-\textbf{K}^{-1}\textbf{u}_{h,D})$ and 
$w_T=C_Tb_T$. Hence we notice that $\curl (w_T)$ belongs to $\textbf{H}$ and is divergence free, therefore
by (\ref{FV}) we have 
\begin{eqnarray*}
 \textbf{a}(\textbf{u}_D,\curl(w_T))=(\textbf{f}_D,\curl(w_T))_D,
\end{eqnarray*}
or equivalently
\begin{eqnarray}\label{curl}
 \int_T(\textbf{K}^{-1}\textbf{u}_{D}-\textbf{f}_D)\cdot\curl (w_T)=0.
\end{eqnarray}
But by Green's formula we may write 
\begin{eqnarray*}
 \int_TC_T w_T=\int_T\curl(\textbf{f}_{h,D}-\textbf{f}_D)w_T+
 \int_T(\textbf{f}_D-\textbf{K}^{-1}\textbf{u}_{h,D})\cdot\curl (w_T),
\end{eqnarray*}
and by using (\ref{curl}) we deduce that 
\[
\int_T C_T w_T=\int_T \curl  (\textbf{f}_{h,D}-\textbf{f}_D)w_T+
\int_T [\textbf{K}^{-1}(\bu_D-\textbf{u}_{h,D})]\cdot \curl (w_T).
\]
By Cauchy-Schwarz inequality we obtain
 \[
\int_T C_T w_T\leq \|\curl  (\textbf{f}_{h,D}-\textbf{f}_D)\|_{0,T} \|w_T\|_{0,T}+
\parallel \textbf{K}^{-1}\textbf{e}_{\textbf{u}_D}\parallel_{0,T} \|\curl w_T\|_{0,T}.
\]
Again the inverse inequalities  (\ref{cl1})-(\ref{cl2}) allows to get
\beq\nonumber
h_T\|\curl(\textbf{f}_{h,D}-\textbf{K}^{-1}\textbf{u}_{h,D})\|_{0,T}\lesssim 
\parallel \textbf{K}^{-1}\textbf{e}_{\textbf{u}_D}\parallel_{0,T}+ h_T \|\curl 
(\textbf{f}_{h,D}- \textbf{f}_D)\|_{0,T},
\eeq
let
\begin{eqnarray}\label{LW3}
 h_T\|\curl(\textbf{f}_{h,D}-\textbf{K}^{-1}\textbf{u}_{h,D})\|_{0,T}&\lesssim &
 \|(\textbf{e}_{\textbf{u}},(e_p,e_{\lambda}))\|_{h,w_T}+\zeta_D.
\end{eqnarray}

\item \textbf{Divergence element in $\Omega_*$, $*\in\{S,D\}$.} We directly see that 
$$\dive (\textbf{u}_{*}-\textbf{u}_{h,*})=-\dive\textbf{u}_{h,*},  \forall *\in\{S,D\}, $$
hence by Cauchy-Schwarz inequality we conclude 
\begin{eqnarray}\label{LW4}
 \|\dive \textbf{u}_{h,*}\|_{0,T}&\lesssim& \|\dive \textbf{e}_{\textbf{u}_*}\|_{0,T}, *\in\{S,D\}.
\end{eqnarray}
\item \textbf{Normal jump in $\Omega_S$.} 
For each edge $E\in\cE_h(\Omega_S)$, we consider $w_E=T_1\cup T_2$. As 
$\textbf{J}_{E,\textbf{n}_E}\in[\mathbb{P}^0(E)]^2$ we set 
\begin{eqnarray*}
 \textbf{w}_E:=-\textbf{J}_{E,\textbf{n}_E}b_E\in [H_0^1(w_E)]^2.
\end{eqnarray*}
First the weak formulation (\ref{FV}) yieds 
\begin{eqnarray*}
 \textbf{a}(\textbf{u},\textbf{w}_E)+\textbf{b}(\textbf{w}_E,p)=(\textbf{f},\textbf{w}_E)_{w_E},
\end{eqnarray*}
that is equivalent to 
\begin{eqnarray}\nonumber
 \int_{w_E}\textbf{f}_S\cdot\textbf{w}_E&=&
 \int_{w_E}[2\mu\textbf{e}(\textbf{u}_S)-p_S\textbf{I}]:\textbf{e}(\textbf{w}_E)\\\nonumber\label{jump}
 &+&
 \int_{w_E}\left[\rho(\textbf{u}_{S}\cdot\nabla)\textbf{u}_{S}
 \right]\cdot\textbf{w}_E\\
 &+&\int_{\partial \omega_E}[p_S\textbf{I}-
 2\mu\textbf{e}(\textbf{u}_{S})] \textbf{n}_E\cdot\textbf{w}_E
\end{eqnarray}
By elementwise partial integration we further have 
\begin{eqnarray*}
 -\int_E\textbf{J}_{E,\textbf{n}_E}\cdot\textbf{w}_E&=&
\int_{\omega_E}(2\mu \textbf{e}(\textbf{u}_{h,S})-p_{h,S}\textbf{I}):\textbf{e}( \textbf{w}_E)\\
 &-&\sum_{i=1}^2\int_{T_i}(-2\mu\dive \textbf{e}(\textbf{u}_{h,S})+\nabla p_{h,S})\cdot\textbf{w}_E.
\end{eqnarray*}
Hence, by previous identity (\ref{jump}) we get 
\begin{eqnarray*}
 -\int_{E}\textbf{J}_{E,\textbf{n}_E}\cdot\textbf{w}_E&=& \displaystyle
 \sum_{i=1}^2\int_{T_i}[\textbf{f}_S-(-2\mu\dive\textbf{e}(\textbf{u}_{h,S})+\nabla p_{h,S})]\cdot\textbf{w}_E\\
 &-&\int_{w_E}[2\mu\textbf{e}(\textbf{e}_{\textbf{u}_{h,S}})-e_{p_S}\textbf{I}]:\textbf{e}(\textbf{w}_E)\\
 &+&\int_{w_E}\left[\rho(\textbf{u}_{S}\cdot\nabla)\textbf{u}_{S}\right]\cdot\textbf{w}_E
\end{eqnarray*}
We introduce the approximation $\textbf{f}_{S,h}$ of $\textbf{f}_S$, use the Cauchy-Schwarz inequality, the inverse 
inequalities (\ref{cl3})-(\ref{cl4}) and
the fact that $\|\textbf{u}_S\|_{1,\Omega_S}$ (see \cite[Lemma 5 and Lemma 6]{MR:2016}) 
 is bounded,
to get,
\begin{eqnarray*}
 \parallel\textbf{J}_{E,\textbf{n}_E}\parallel_{0,E}&\lesssim &
  h_E^{1/2}\left(\sum_{i=1}^2
( \parallel\textbf{f}_S-\textbf{f}_{h,S}\parallel_{0,T_i}+
 \parallel \textbf{r}_{S,T_i}\parallel_{0,T_i})\right)\\
 &+&  h_E^{-1/2}\left(\|\nabla(\textbf{e}_{\textbf{u}_S})\|_{0,w_E}+
 \parallel e_{p_S}\parallel_{0,w_E}\right) 
\end{eqnarray*}
As $h_E\leq 1$, then by (\ref{LW1}) we obtain,
\begin{eqnarray}\label{LW5}
 \parallel\textbf{J}_{E,\textbf{n}_E}\parallel_{0,E}&\lesssim &
 \|(\textbf{e}_{\textbf{u}},(e_p,e_{\lambda}))\|_{h,w_E}+\zeta_S.
\end{eqnarray}

\item \textbf{Interface elements on $\Sigma$.} To estimate the interface elements, we fix an edge $E$ in 
$\Sigma$ and for a constant $r_E$ fixed later on and a unit vector $\textbf{N}$, we consider
$\textbf{w}_E=(\textbf{w}_{E,S},\textbf{w}_{E,D})$ such that:
\begin{eqnarray}
 \textbf{w}_E=r_Eb_E\textbf{N}.
\end{eqnarray}
The vector $\textbf{w}_E$
 clearly, belongs to $\textbf{H}$. Hence the weak formulation (\ref{FV}) yieds 
\[
\textbf{a}(\bu, \bw_E)+\textbf{b}(\bw_E, p)=(\bff, \bw_E)_{\omega_E},
\]
that is equivalent to
\bea\label{serge12:03:4}
&&\int_{T_S} (2\mu \textbf{e}(\bu_S): \textbf{e}(\bw_E)-p_S\dive \bw_E)+
\int_{T_S} ( \textbf{K}^{-1}\bu_D\cdot \bw_E-p_D\dive \bw_E)
\\
&&
\nonumber
+\frac{\mu\alpha_d}{\sqrt{\tau\cdot\kappa\cdot\tau}} (\textbf{u}_S\cdot\tau,
 \textbf{w}_{E,S}\cdot\tau)_E+
 \int_{w_E}\left[\rho(\textbf{u}_{S}\cdot\nabla)\textbf{u}_{S}\right]\cdot\textbf{w}_E
=(\bff, \bw_E)_{\omega_E},
\eea
where $T_S$ (resp. $T_D$) is the unique triangle included in $\bar \Omega_S$ 
(resp. $\bar \Omega_D$) having $E$ as edge.\\
On the other hand, integrating by parts in $T_S$ and $T_D$ yieds
\beqs
&&\int_{T_S} (2\mu \textbf{e}(\bu_{h,S}): \textbf{e}(\bw_{E,S})-p_{h,S}\dive \bw_{E,S})+
\int_{T_D} ( \textbf{K}^{-1}\bu_{h,D}\cdot \bw_{E,D}-p_{h,D}\dive \bw_{E,D})
\\
&&
\nonumber
+
 \frac{\mu\alpha_d}{\sqrt{\tau\cdot\kappa\cdot\tau}} (\textbf{u}_{h,S}\cdot\tau,
 \textbf{w}_{E,S}\cdot\tau)_E
\\
&&
=
-\int_{T_S} (2\mu \dive \textbf{e}(\bu_{h,S})-\nabla p_{h,S})\cdot \bw_{E,S}+
\int_{T_D} ( \textbf{K}^{-1}\bu_{h,D}\cdot \bw_{E,D}+\nabla p_{h,D})\cdot\bw_{E,D}
\\
&&
\nonumber
+\frac{\mu\alpha_d}{\sqrt{\tau\cdot\kappa\cdot\tau}} (\textbf{u}_{h,S}\cdot\tau,
 \textbf{w}_{E,S}\cdot\tau)_E
\\
&&
-\int_E \left([p_{h,S}]_E \bw_{E,S}\cdot \bn_E-2\mu [\textbf{e}(\bu_{h,S}) \bn_E]\cdot  \bw_{E,S}\right).
\eeqs
Subtracting this identity to (\ref{serge12:03:4}) we find
\beqs
&&\int_E ([p_h]_E \bw_E\cdot \bn_E-2\mu (\textbf{e}(\bu_{h,S}) \bn_E\cdot  \bw_{E,S})
-\frac{\mu\alpha_d}{\sqrt{\tau\cdot\kappa\cdot\tau}} (\textbf{u}_{h,S}\cdot\tau,
 \textbf{w}_{E,S}\cdot\tau)_E
\\
\nonumber
&&=\int_{T_S} (2\mu \textbf{e}(\textbf{e}_{\textbf{u}_S}): \textbf{e}(\bw_{E,S})-e_{p_S}\dive \bw_{E,S})+
\int_{T_D} (  \textbf{K}^{-1}\textbf{e}_{\textbf{u}_D}\cdot \bw_{E,D}-e_{p_D}\dive \bw_{E,D})
\\
&&
\nonumber
+\frac{\mu\alpha_d}{\sqrt{\tau\cdot\kappa\cdot\tau}} (\textbf{e}_{\textbf{u}_S}\cdot\tau,
 \textbf{w}_{E,s}\cdot\tau)_E
-\int_{T_S}\left[\rho(\textbf{u}_{S}\cdot\nabla)\textbf{u}_{S}\right]\cdot\textbf{w}_E\\
&&
-\int_{T_S} (\bff_S+2\mu \dive \textbf{e}(\bu_{h,S})-\nabla p_{h,S})\cdot \bw_{E,S}-
\int_{T_D} (\bff_D- \textbf{K}^{-1}\bu_{h,D}-\nabla p_{h,D})\cdot\bw_{E,D}.
\eeqs
In that last terms introducing the element residual $\textbf{r}_{*,T}$, $*\in\{S,D\}$, we arrive at
\bea\label{Interfacejump}\nonumber
&&\int_E ([p_{h,S}]_E \bw_E\cdot \bn_E-2\mu (\textbf{e}(\bu_{h,S}) \bn_E\cdot  \bw_{E,S})
-\frac{\mu\alpha_d}{\sqrt{\tau\cdot\kappa\cdot\tau}} (\textbf{u}_{h,S}\cdot\tau,
 \textbf{w}_{E,S}\cdot\tau)_E
\\\nonumber
&&=\int_{T_S} (2\mu \textbf{e}(\textbf{e}_{\textbf{u}_S}): \textbf{e}(\bw_{E,S})-e_{p_S}\dive \bw_{E,S})+
\int_{T_D} (  \textbf{K}^{-1}\textbf{e}_{\textbf{u}_D}\cdot \bw_{E,D}-e_{p_D}\dive \bw_{E,D})
\\
&&
+\frac{\mu\alpha_d}{\sqrt{\tau\cdot\kappa\cdot\tau}} (\textbf{e}_{\textbf{u}_S}\cdot\tau,
 \textbf{w}_{E,S}\cdot\tau)_E
-\int_{T_S}\left[\rho(\textbf{u}_{S}\cdot\nabla)\textbf{u}_{S}\right]\cdot\textbf{w}_E\\\nonumber
&&
-\int_{T_S} (\textbf{f}_S-\textbf{f}_{h,S}+\textbf{r}_{S,T})\cdot \bw_{E,S}-
\int_{T_D} (\textbf{f}_D-\textbf{f}_{h,D}+\textbf{r}_{D,T})\cdot\bw_{E,D}.
\eea
\begin{enumerate}
 \item To estimate the term $\displaystyle\sum_{E\in\cE(T)\cap\cE_h(\bar{\Sigma})}
   \left\|\frac{\alpha_d\mu}{\sqrt{\tau\cdot\kappa\cdot\tau}}\textbf{u}_{h,S}\cdot\tau+2\mu\textbf{n}_S\cdot
 \textbf{e}(\textbf{u}_{h,S})\cdot\tau\right\|_{0,E}^2$, 
 we take $$r_E=\frac{\alpha_d\mu}{\sqrt{\tau\cdot\kappa\cdot\tau}}\textbf{u}_{h,S}\cdot\tau+2\mu\textbf{n}_S\cdot
 \textbf{e}(\textbf{u}_{h,S})\cdot\tau \mbox{  and  }
 \textbf{N}=\tau. $$
 With this choice, $\textbf{w}_E\cdot\textbf{n}_S+\textbf{w}_E\cdot\textbf{n}_D=0 \mbox{  on  } \Sigma$. And thus,
 the identity (\ref{Interfacejump}) and the inverse inequality (\ref{cl3}) yield,
 \beqs
\|r_E\|_E^2&\lesssim &
\int_{T_S} (2\mu \textbf{e}(\textbf{e}_{\textbf{u}_S}): \textbf{e}(\bw_{E,S})-e_{p_S}\dive \bw_{E,S})+
\int_{T_D} ( \textbf{K}^{-1}\textbf{e}_{\textbf{u}_D}\cdot \bw_{E,D}-e_{p_D}\dive \bw_{E,D})
\\
&+&
\nonumber
 \frac{\mu\alpha_d}{\sqrt{\tau\cdot\kappa\cdot\tau}} (\textbf{e}_{\textbf{u}_S}\cdot\tau,
 \textbf{w}_{E,S}\cdot\tau)_E-\int_{T_S}\left[\rho(\textbf{u}_{S}\cdot\nabla)\textbf{u}_{S}\right]\cdot\textbf{w}_{E,S}
\\
&-&
\int_{T_S} (\bff_S-\bff_{h,S}+ \textbf{r}_{S, T})\cdot \bw_{E,S}-
\int_{T_D} (\bff_D-\bff_{h,D}+ \textbf{r}_{D, T})\cdot\bw_{E,D}.
\eeqs
Hence  Cauchy-Schwarz inequality, the inverse inequalities (\ref{cl4}), the upper error bound of 
$\|\textbf{r}_{*,T}\|_{0,T}$ [ i.e. estimates (\ref{LW1}) and (\ref{LW2})], and 
the fact that $\|\textbf{u}_S\|_{1,\Omega_S}$ (see \cite[Lemma 5 and Lemma 6]{MR:2016}) 
 is bounded lead to
 \beq\label{LW6}
\left\|\frac{\alpha_d\mu}{\sqrt{\tau\cdot\kappa\cdot\tau}}\textbf{u}_{h,S}\cdot\tau+2\mu\textbf{n}_S\cdot
 \textbf{e}(\textbf{u}_{h,S})\cdot\tau\right\|_{0,E}\lesssim  \|(\textbf{e}_{\textbf{u}},(e_p,e_{\lambda}))\|_{h,\omega_E}+
 \sum_{T'\subset \omega_E} \zeta_{T'},
\eeq
with $\omega_E= T_S\cup T_D$.
\item To estimate the term $
\displaystyle\sum_{E\in\cE(T)\cap\cE_h(\bar{\Sigma})}
   \|-p_{h,S}+p_{h,D}-2\mu\textbf{n}_S\cdot\textbf{e}(\textbf{u}_{h,S})\cdot\textbf{n}_S\|_{0,E}^2$, 
    we take 
$$r_E=p_{h,D}-p_{h,S}+2\mu \textbf{n}_s\cdot\textbf{e}(\textbf{u}_{h,S})\cdot \textbf{n}_S
\mbox{  and  } \textbf{N}=\bn_S.$$
As before the identity (\ref{Interfacejump}), 
the  inverse inequalities (\ref{cl3}) and (\ref{cl4}), the upper bounds of  
$\|\textbf{r}_{*,T}\|_{0,T}$, $*\in\{S,D\}$ and of $(\ref{LW5})$, and  the fact that $\|\textbf{u}_S\|_{1,\Omega_S}$ 
(see \cite[Lemma 5 and Lemma 6]{MR:2016}) is bounded lead to
\begin{eqnarray}\label{LW7}
  \|p_{h,D}-p_{h,S}+2\mu \textbf{n}_S\cdot\textbf{e}(\textbf{u}_{h,S})\cdot 
 \textbf{n}_S\|_{0,E}\lesssim  
 \|(\textbf{e}_{\textbf{u}},(e_p,e_{\lambda}))\|_{h,\omega_E}+
 \sum_{T'\subset \omega_E} \zeta_{T'}.
\end{eqnarray}
\item For $E\in\cE_h(\bar{\Sigma})$, the term $
\displaystyle\sum_{E\in\cE_h(\bar{\Sigma})}h_E\|p_{h,D}-\lambda_h\|_{0,E}^2$  is bounded as follows:
\begin{eqnarray}\nonumber\label{LW8}
 \displaystyle\sum_{E\in\cE_h(\bar{\Sigma})}h_E\|p_{h,D}-\lambda_h\|_{0,E}^2 
 &\lesssim& \displaystyle\sum_{E\in\cE_h(\bar{\Sigma})}h_E \left(\|\lambda-\lambda_h\|_{0,E}^2+
 \|\lambda-p_{h,D}\|_{0,E}^2\right)\\
 &\lesssim&h\|\lambda-\lambda_h\|_{1/2,\Sigma}^2\lesssim \|(\textbf{e}_{\textbf{u}},(e_p,e_{\lambda}))\|_{h,\omega_E}^2.
\end{eqnarray}
\item Analogously to  \cite[Lemma 4.7]{49}, the term \\
$\displaystyle\sum_{E\in\cE(\bar{\Sigma})}\|\textbf{u}_{h,S}\cdot\textbf{n}_S+\textbf{u}_{h,D}\cdot\textbf{n}_D\|_{0,E}$ 
can be bounded by 
\begin{eqnarray*}
 \|\textbf{u}_{h,S}\cdot\textbf{n}_S+\textbf{u}_{h,D}\cdot\textbf{n}_D\|_{0,E}^2
 &\lesssim& h_{T_S}^{-1}\|\textbf{u}_S-\textbf{u}_{h,S}\|_{0,T_S}^2+h_{T_S}^2\|\nabla (\textbf{u}_S-\textbf{u}_{h,S})\|_{0,T}^2\\\nonumber
 &+&\|\textbf{u}_D-\textbf{u}_{h,D}\|_{0,T_D}^2+h_{T_D}^2\|\dive(\textbf{u}_D-\textbf{u}_{h,D})\|_{0,T}^2,
\end{eqnarray*}
where $E=\partial T_S\cap \partial T_D$.
Now, as for each $*\in\{S,D\}$, $h_{T_*}\leq 1$ and $h_{T_S}^{-1}\leq h_E^{-1}$, then we have the estimate,
\begin{eqnarray}\label{LW9}
 \|\textbf{u}_{h,S}\cdot\textbf{n}_S+\textbf{u}_{h,D}\cdot\textbf{n}_D\|_{0,E}
 &\lesssim&  \|(\textbf{e}_{\textbf{u}},(e_p,e_{\lambda}))\|_{h,\omega_E}, \mbox{  with  } \omega_E=T_S\cup T_D.
\end{eqnarray}

\end{enumerate}
\item \textbf{Tangential jump in $\bar{\Omega}_D$.} Finally, for $E\in\cE_h(\bar{\Omega}_D)$, the terms\\
$\displaystyle\sum_{E\in\cE_h(\Omega_D)}
h_E\|[(\textbf{f}_{h,D}-\textbf{K}^{-1}\textbf{u}_{h,D}-\nabla p_{h,D})\cdot\tau_E]_E\|_{0,E}^2$ \\and 
$\displaystyle\sum_{E\in\cE_h(\partial\Omega_D)}
h_E\|(\textbf{f}_{h,D}-\textbf{K}^{-1}\textbf{u}_{h,D}-\nabla p_{h,D})\cdot\tau_E\|_{0,E}^2$ respectively are bounded 
analogously as 
in 
\cite[Lemma 3.16]{SGR:2016} by: 
\begin{eqnarray}\label{LW10}
 h_E\|[(\textbf{f}_{h,D}-\textbf{K}^{-1}\textbf{u}_{h,D}-\nabla p_{h,D})\cdot\tau_E]_E\|_{0,E}^2
 &\lesssim &\|\textbf{u}_D-\textbf{u}_{h,D}\|_{0,w_E}^2\\\nonumber
 &\lesssim&  \|(\textbf{e}_{\textbf{u}},(e_p,e_{\lambda}))\|_{h,\omega_E}^2,
 \end{eqnarray}
  for all $ E\in \cE_h(\Omega_D)$, where the set $w_E$ is given by  
 $$w_E:=\cup\left\{T'\in\cT_h^D: E\in \cE(T')\right\};$$
 and 
\begin{eqnarray}\label{LW11}
 h_E\|(\textbf{f}_{h,D}-\textbf{K}^{-1}\textbf{u}_{h,D}-\nabla p_{h,D})\cdot\tau_E\|_{0,E}^2
 &\lesssim& \|\textbf{u}_D-\textbf{u}_{h,D}\|_{0,T_E}^2\\\nonumber
 &\lesssim&  \|(\textbf{e}_{\textbf{u}},(e_p,e_{\lambda}))\|_{h,\omega_E}^2,
\end{eqnarray}
for all $E\in\cE_h(\partial \Omega_D)$, with  $T_E$, the triangle of $\cT_h^D$ having $E$ as a edge.

\end{enumerate}
The estimates (\ref{LW1}), (\ref{LW2}), (\ref{LW3}), (\ref{LW4}), (\ref{LW5}), 
(\ref{LW6}), (\ref{LW7}), (\ref{LW8}), (\ref{LW9}), (\ref{LW10}) and (\ref{LW11})
provide the desired local
lower error bound of Theorem \ref{Lowerbound}.
\end{proof}

\section{Summary}\label{sec:r3}
In this paper we have discussed  a posteriori error estimates for a finite element approximation of the 
Navier-Stokes/Darcy system. 
A residual type a posteriori error estimator is provided, that is both reliable and efficient. Many issues 
remain to be addressed in this area, let us mention    other 
types of a posteriori error estimators or 
implementation and convergence analysis of  adaptive finite element methods.
Further it is well known that an internal layer appears at the interface $\Sigma$
as the permeability tensor degenerates, in that case   anisotropic meshes
have to be used in this layer (see for instance \cite{HA:2016,ESK:04}). Hence we intend to   extend   our results to
such anisotropic meshes.
\section{Acknowledgements}
The first author thanks African Institute for Mathematical Sciences (AIMS South
Africa) for hosting him for a two months research visit and Serge Nicaise
(UVHC, FRANCE) for his collaboration.


\end{Large}
\end{document}